\newcommand{\highlightedBox}[1]{ #1}
\newtheorem{rem}{Remark}
\newtheorem{prop}{Proposition}
\newtheorem{defi}{Definition}
\newtheorem{coro}{Corollary}
\newtheorem{assu}{Assumption}
\newtheorem{thm}{Theorem}
\newtheorem{lem}{Lemma}
\author{A.F. Hanif$^\dag$, H. Tembine$^\ddag$, M. Assaad$^\ddag$, D. Zeghlache$^\dag$
}
\begin{document}

\title{On the Convergence of a Nash Seeking Algorithm with Stochastic State Dependent Payoffs\thanks{A preliminary work \cite{SPAWC2012} (a conference paper of 5 pages) that focuses on the telecommunications application of some of the results without proofs has been presented at SPAWC 2012. All the theoretical results (i.e. theorems and proofs) are presented in this paper. The research of Ahmed Farhan Hanif and Djamal Zeghlache has received partial funding from the European Community's Seventh Framework Programme (FP7/2007-2013) under grant agreement SACRA n° 249060.  $^\dag$Institut Mines-T\'el\'ecom, T\'el\'ecom SudParis, RS2M Dept, France, $^\ddag$Telecomm Dept, Ecole Superieure d'Electricite (Supelec), France.  $^\dag$\{ahmedfarhan.hanif,djamal.zeghlache\}@it-sudparis.eu, $^\ddag$\{hamidou.tembine,mohamad.assaad\}@supelec.fr.}}

\maketitle

\begin{abstract}
Distributed strategic learning has been getting attention in recent years. As  systems become distributed finding Nash equilibria in a distributed fashion is becoming more important for various applications. In this paper, we develop a distributed strategic learning framework for seeking Nash equilibria under stochastic state-dependent payoff functions. We extend the work of Krstic et.al. in \cite{Kristic}  to the case of stochastic state dependent payoff functions. We develop an iterative distributed algorithm for  Nash seeking and examine its convergence to a limiting trajectory defined by an Ordinary Differential Equation (ODE). We show convergence of our proposed algorithm for vanishing step size and provide an error bound for fixed step size. Finally, we conduct a stability analysis and apply the proposed scheme in a generic wireless networks. We also present numerical results which corroborate our claim.
\end{abstract}

\begin{keywords}
Stochastic estimation, State-dependent Payoff, Extremum seeking, Sinus perturbation, Nash Equilibrium.
\end{keywords}
\section{Introduction}
\label{Introduction}
In this paper we consider a fully distributed system, which consists of non cooperative nodes which can be modeled as a non cooperative game for Nash seeking.
Let us consider a distributed system with $N$  nodes or agents which interact with one another and each has a payoff/utility/reward to maximize. The decision or action of each node has an impact on the reward of the other nodes, which makes the problem challenging in general. In such systems each node has access to a numerical value of their utility/reward at each time. In such systems it might not be possible to have a bird's eye view of the system as it is too complicated or is constantly changing.
Let $a_{j,k}$ be the action of node $j$ at time $k$ and the numerical value of the utility of this node is given by $\tilde{r}_{j,k}.$ Where
$\tilde{r}_{j,k}=r_{j}(\mathbf{S}_k,\mathbf{a}_k)+\eta_{j,k}$ were $\eta_{j,k}$ represents noise , $r_j:\mathcal{S}\times \mathbb{R}_{+}^N \longrightarrow \mathbb{R}$ is the payoff function of node $j$, $\mathbf{S}_k\in \mathcal{S}\subseteq \mathbb{C}^{N\times N}$ is the state such that $\mathcal{S}$ is compact, $\mathbf{a}_k=(a_{1,k},\ldots,a_{N,k})$ is the action vector containing actions of all nodes at time $k.$
Figure \ref{Nodesdiagram} shows the system model where we have $N$ interacting nodes.
The rewards are interdependent as the nodes interact with one another. The only assumption that we can make here is the existence of a local solution. Each of these nodes $j$ has access to the numerical value of their respective reward $\tilde{r}_{j,k}$ and it needs to implement a scheme to select an action $a_{j,k}$ such that its utility is maximized. The above scenario can be interpreted as an interactive game. In this paper we explore learning in such games which is synonymous with designing distributed iterative algorithms that converge to the Nash equilibrium.

    \begin{figure*}[!hbtp]
        \begin{center}
            \tikzstyle{block_pu} = [draw, draw=black!90,thick,fill=black!20,rectangle, minimum height=16.5em, minimum width=6em,rounded corners=2em]


\tikzstyle{antenna} = [draw, draw=black!50,thick,fill=black!20, isosceles triangle,rotate=-90,scale=.4 ]

\tikzstyle{sum2}=[ node distance=2cm]

\tikzstyle{sum}=[draw,draw=black!90,thick,double, fill=black!10, circle , node distance=2cm]
\tikzstyle{input} = [coordinate]
\tikzstyle{output} = [coordinate] \tikzstyle{pinstyle} = [pin edge={to-,thin,black}]


\begin{tikzpicture}

\foreach \r in {1,3,4}
{
    \ifnum \r=3
    \node [sum] (r\r) at (6.5em,-4*2.5 em) {$Rx_\r$};
    \else
    \node [sum] (r\r) at (6.5em,-4*\r em) {$Rx_\r$};
    \fi

    \foreach \t in {1,3,4}
    {
        \ifnum \t=3
            \node [sum] (t\t) at (-3.5em,-4*2.5 em) {$x_j$};
            \node [sum2] (a1) at (-3.5em,-4*1.5 em) {$\vdots$};
            \node [sum2] (a2) at (-3.5em,-4*3 em) {$\vdots$};
        \else
            \ifnum \t=4
                \node [sum] (t\t) at (-3.5em,-4*\t em) {$x_N$};
            \else
                \node [sum] (t\t) at (-3.5em,-4*\t em) {$x_\t$};
            \fi
        \fi

    \ifnum \r=\t

       \ifnum \r=4
            \draw [black!90, -stealth ,shorten >=.25em,thick,bend right=45,looseness=1] (t\t.east) -- node [above] {${a}_{N, k}$} (r\r.west) ;
            \draw [dashed, color=black!50!black, -stealth,shorten >=.25em,thick, bend right, looseness=1] (r\r.north) to node [below] {$\tilde{r}_{N,k}$} (t\t.north)  ;
        \else

         \ifnum \r=3
            \draw [black!90, -stealth ,shorten >=.25em,thick,bend right=45,looseness=1] (t\t.east) -- node [above] {${a}_{j, k}$} (r\r.west) ;
            \draw [dashed, color=black!50!black, -stealth,shorten >=.25em,thick, bend right, looseness=1] (r\r.north) to node [below] {$\tilde{r}_{j,k}$} (t\t.north)  ;
            \else

            \draw [black!90, -stealth ,shorten >=.25em,thick,bend right=45,looseness=1] (t\t.east) -- node [above] {${a}_{\t, k}$} (r\r.west) ;
            \draw [dashed, color=black!50!black, -stealth,shorten >=.25em,thick, bend right, looseness=1] (r\r.north) to node [below] {$\tilde{r}_{\t,k}$} (t\t.north)  ;
            \fi
        \fi

    \else
    \fi

    }
}
\node [block_pu] (r_block) at (11em,-4*2.4em) {$
                                                  \begin{array}{cl}
                                                    \text{\textbf{Dynamic}} & \text{\textbf{Environment}}\\
                                                    &\\
                                                    \mathcal{A}_j & \text{action}\\
                                                    \mathcal{S} & \text{state}\\
                                                    \{r_{j}(.)\} & \text{payoff  function}

                                                  \end{array}
                                                $};
%
\end{tikzpicture}
            \caption[Nodes with actions and utilities]{\label{Nodesdiagram}Nodes interacting with each other through a dynamic environment}
        \end{center}
    \end{figure*}
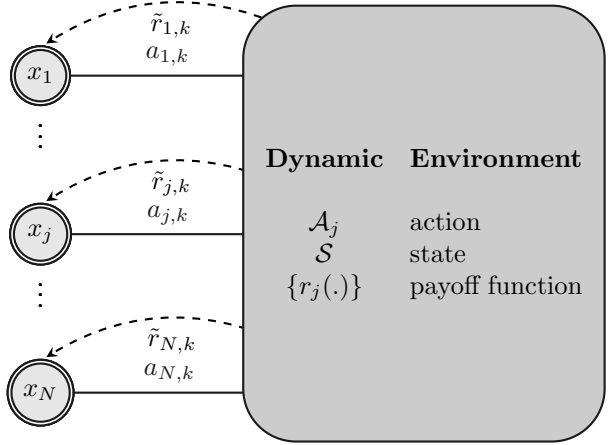

$\bullet$  Different approaches, mainly based on gradient descent or ascent method \cite{Snyman2005}, have been developed to achieve a local optimum (or global optimum in some special cases, e.g. concavity of the payoff, etc.) of the distributed optimization problem.
The method of gradient ascent is also called steepest ascent method, which starts at a point $a_0$ and, as many times as needed, moves from $a_k$ to $a_{k+1}$ by maximizing along the line extending from $a_k$ in the direction of $\nabla r_{j}(\mathbf{S}_k,\mathbf{a}_k)$, the local downhill gradient. This gives the iterative scheme  $ a_{j,k+1}=a_{j,k}+\lambda_k \nabla r_j(\mathbf{S}_k, \mathbf{a}_k)$ where $\lambda_k>0$ is a learning rate/step size. For the applicability of the above algorithm it is necessary to have access to the value of  $\nabla r_{j}(.)$ at each time $k$.
The action can be positive  and upper bounded by a certain maximum value $a_{j,\max}>0$ for some engineering applications. Thus, the component  $a_{j,k}$ needs to be projected in the domain $[0,a_{j,\max}].$ This leads to a projected gradient descent or ascent algorithms: $ a_{j,k+1}= \mbox{proj}_{[0,a_{j,\max}]}\left\{a_{j,k}+\lambda_k \nabla r_{j}(\mathbf{S}_k,\mathbf{a}_k)\right\}$ where $\mbox{proj}$ denotes the projection operator. At each time $k$, node $j$ needs to observe/compute the gradient term  $\nabla r_{j}(\mathbf{S}_k,\mathbf{a}_k).$  Use of the aforementioned gradient based method requires the knowledge of (i)  the system state, (ii) the  actions of others and their states and or (iii) the mathematical  structure (closed form expression) of the payoff function. As we can see, it will be difficult for node $j$ to compute the gradient if the expression for the payoff function $r_j(.)$ is unknown and/or if the states and actions of other nodes are not observed as $r_j(.)$ depends on the actions and states of others.

$\bullet$ There are several methods for Nash equilibrium seeking where we only have access to the numerical value of the function at each time and not its gradient (e.g.  Complex functions which cannot be differentiated or unknown functions). Some of them are detailed below.

 The stochastic gradient ascent proposes to feedback the numerical value of gradient of reward function $\nabla r_{j}$  of node $j$ (which can be noisy) to itself.  This supposes in advance that a noisy gradient can be computed or is available at each node. Note that if  the numerical value of the  gradients of the payoffs are not known by the players, this scheme cannot be used.
 In \cite{Bianchi2013} projected stochastic gradient based algorithm is presented.
 A distributed asynchronous stochastic gradient optimization algorithms is presented in \cite{Bertsekas1986}.
 Incremental Sub-gradient Methods for Non-differentiable Optimization are discussed in \cite{Nedic2001}.
A distributed Optimization algorithms for sensor networks is presented in \cite{Rabbat2004}. Interested readers are referred to a survey by Bertsekas \cite{Bertsekas2010} on Incremental gradient, subgradient, and proximal methods for convex optimization. In  \cite{Stankovic2009acc} the authors present Stochastic extremum seeking with applications to mobile sensor networks.

$\bullet$  Krstic et.al. in recent years have contributed greatly to  the field of \textit{non-model based} extremum seeking. In \cite{Kristic}, the authors propose a Nash seeking algorithm for games with continuous action spaces. They proposed a fully distributed learning algorithm and requires only a measurement of the numerical value of the payoff. Their scheme is based on sinus perturbation (i.e. deterministic perturbation instead of stochastic perturbation) of the payoff function in continuous time. However,  discrete time learning scheme with sinus perturbations is not examined in \cite{Kristic}. In \cite{Stankovic2010} extremum seeking algorithm with sinusoidal perturbations for \textit{non-model based} systems has been extended and modified to the case of i.i.d. noisy measurements and vanishing sinus perturbation, almost sure convergence to equilibrium is proved. \textit{Sinus perturbation} based extremum seeking for \textit{state independent noisy measurement}  is presented in \cite{Stankovic2012}. Kristic et al. \cite{stochasticKrstic} have recently extended Nash seeking scheme to stochastic non-sinusoidal perturbations. In this paper we extend the work in \cite{Kristic} to the case of \textit{stochastic state dependent} payoff functions, and use deterministic perturbations for Nash seeking. One can see easily the difference between this paper and the previous existing works \cite{Stankovic2010}\cite{Stankovic2012}. In these works, the noise $\eta_j$ associated with the measurement is i.i.d. which does not hold in practice especially in engineering application where the noise is in general time correlated. In our case, we consider a stochastic state dependent payoff function  and our problem can be written in Robbins-Monro form with a  Markovian (correlated) noise given by  $\eta_j=r_j(\mathbf{S},\mathbf{a})-\mathbb{E}_\mathbf{S}[r_j(\mathbf{S},\mathbf{a})]$ (this will become clearer in the next sections),  i.e.  the associated noise is stochastic state dependent which is different from the case of i.i.d. noise.

Although stochastic estimation techniques do estimate the gradient but they introduce a level of uncertainty, to avoid this it is possible to introduce sinus perturbation instead of stochastic perturbation. This is particularly  helpful when one node is trying to follow the actions of the other nodes in a certain application.
{
\subsection{Contribution}
In this paper, we propose a {\it discrete time } learning algorithm, using sinus perturbation, for {\it continuous}  action games where each node has only a numerical realization of the payoff at each time.  We therefore extend the classical Nash Seeking with sinus perturbation method \cite{Kristic} to the case of  discrete time and stochastic state-dependent payoff functions. We prove that our algorithm  converges locally to a state independent Nash equilibrium in Theorem \ref{Theorem1} for vanishing step size and provide an error bound in Theorem \ref{Theorem2} for fixed step size. Note that since the payoff function may not necessarily be  concave, finding a global optimum at affordable complexity can be difficult in general even in deterministic case (fixed state) and known closed-form expression of payoff. We also show the convergence time for the sinus framework in Corollary \ref{Corollary1}. In this paper we analyze and prove that the algorithm converges to a limiting ODE. We provide the convergence time and error bound between our discrete time algorithm and the ODE.



The proof of the theorems are given in  Appendix \ref{appendix_Convergence}.

\subsection{Structure of the paper}
The remainder of this paper is organized as follows. Section \ref{Problem_Formulation} provides the proposed distributed stochastic learning algorithm. The performance analysis of the proposed algorithm (convergence to ODE, error bounds) is presented in section \ref{Mainresults}. A numerical example with convergence plots is provided in section \ref{NumericalExample}. Section \ref{Conclusion} concludes the paper. Appendix contains the proofs.

\subsection{Notations}

We summarize some of the notations in Table \ref{table_of_notations}.
\begin{table}[h]
\caption{Summary of Notations} \label{table_of_notations}
\begin{center}
\begin{tabular}{ll}
\hline
  Symbol & Meaning \\ \hline
  $\mathcal{N}$ & set of nodes \\
  $\mathcal{A}_j$ &  set of choices of node $j,$  \\
  $\mathbf{S}$ & state space \\
 $ r_{j}$ & payoff of node $j$\\
 $a_{j,k}$ & decision  of $j$ at time $k$\\
 $\mathbf{a}_{-j,k}$ & $(a_{j',k})_{j'\neq j}$\\
 $\mathbb{E}$ & expectation operator\\
 $\nabla$ & gradient operator
  \\ \hline
\end{tabular}
\end{center}
\end{table}
\section{Problem Formulation and Proposed Algorithm\label{Problem_Formulation}}

Let there be $N$ distributed nodes each with a payoff function represented by $r_{j}(\mathbf{S}_k,{a}_{j,k},\mathbf{a}_{-j,k})$ at time $k$ which is used to formulate the following robust problems:
\begin{eqnarray}\label{eqmaxreward}
\highlightedBox{\sup_{a_{j}\geq0} \mathbb{E}_{\mathbf{S}} r_{j}(\mathbf{S},a_j,\mathbf{a}_{-j})\; \forall \; j\in \mathcal{N}\triangleq\{1,\ldots,N\}}
\end{eqnarray}
A solution to the problem (\ref{eqmaxreward}) is called {\it state-independent} equilibrium solution.

\begin{defi} [Nash Equilibrium (state-independent)]\label{definition1}
$\mathbf{a}^*=(a_{j}^*,\mathbf{a}^*_{-j})\in  \prod_{j'}\mathcal{A}_{j'}$ is a (state-independent) Nash equilibrium point if
\begin{eqnarray}
{\mathbb{E}_{\mathbf{S}}r_j(\mathbf{S},a_j^*,\mathbf{a}^*_{-j})\geq \mathbb{E}_{\mathbf{S}}r_j(\mathbf{S},a'_j,\mathbf{a}^*_{-j}),\ \forall a'_j\in\mathcal{A}_j,\;a'_j\neq a_j^*}
\end{eqnarray} where $\mathbb{E}_{\mathbf{S}}$ denotes the mathematical expectation over the state.
\end{defi}

\begin{defi}[Nash Equilibrium (state-dependent)]\label{definition2}
We define a state-dependent strategy $\tilde{a}_j$ of a node $j$  as a mapping from $\mathcal{S}$ to the action space $\mathcal{A}_j.$ The set of state-dependent strategy is $\mathcal{PG}_j:\ \{\tilde{a}_j:\  \mathcal{S} \longrightarrow \mathcal{A}_j,\ \mathbf{S}\longmapsto \tilde{a}_j(\mathbf{S})\in\mathcal{A}_j\}.$
$$\tilde{a}^*=(\tilde{a}_{j}^*,\tilde{\mathbf{a}}^*_{-j})\in \prod_{i}\mathcal{PG}_i$$ is a (state-dependent) Nash equilibrium point if
\begin{eqnarray}
{\mathbb{E}_{\mathbf{S}}r_j(\mathbf{S},\tilde{a}_j^*(\mathbf{S}),\tilde{\mathbf{a}}^*_{-j}(\mathbf{S}))\geq \mathbb{E}_{\mathbf{S}}r_j(\mathbf{S},\tilde{a}'_j(\mathbf{S}),\tilde{\mathbf{a}}^*_{-j}(\mathbf{S})),\ \forall \tilde{a}'_j\in\mathcal{PG}_j}
\end{eqnarray}
\end{defi}
Here we define $\mathbf{a}:=(a_j,\mathbf{a}_{-j})$
Assuming that  node $j$  has access to it's realized payoff at each time $k$  but the closed-form expression of $r_j(\mathbf{S}_k, {a}_{j,k},\mathbf{a}_{-j,k})$ is unknown to node $j.$
A solution to the above problem is a state-independent equilibrium in the sense no node has incentive to change its action when the other nodes keep their choice. It is well-known that equilibria can be different than global optima, the gap between the worse equilibrium and the global maximizer is captured by the so-called {\it price of anarchy}. Thus solution obtained by our method can be suboptimal with respect to maximizing the sum of all the payoffs. We study the local stability of the stochastic algorithm.

The robust game is defined as follows: $\mathcal{N}$ is the set of  nodes, $\mathcal{A}_j$ is the action space of node $j$. $\mathcal{S}$ is the state space of the whole system, where $\mathcal{S} \subseteq \mathbb{C}^{N\times N};$ and $r_j:\ \mathcal{S}\times \prod_{j'\in\mathcal{N}}\mathcal{A}_{j'} \longrightarrow \mathbb{R}$ is a smooth function. It should be mentioned here for clarity that the decisions are taken in a decentralized fashion by each node. Let us continue by stating that  $\mathcal{N}$ is the set of  nodes, $\mathcal{A}_j$ is the action space of node $j$,  $\mathcal{S}$ is the state space of the whole system, where $\mathcal{S}\ \subseteq \mathbb{C}^{N\times N}$ and  $r_j:\ \mathcal{S}\times \prod_{j'\in\mathcal{N}}\mathcal{A}_{j'} \longrightarrow \mathbb{R}.$
%

Games with uncertain payoffs are called robust games. Since state can be stochastic, we get a robust game. Here we will focus on the analysis of the so-called {\it expected robust game} i.e $(\mathcal{N}, \mathcal{A}_j, \mathbb{E}_{\mathbf{S}}r_j(\mathbf{S},.)).$ A (state-independent) Nash equilibrium point \cite{nash1950} of the above robust game is a strategy profile such that no node can improve its payoff by  unilateral deviation, see Definition \ref{definition1} and Definition \ref{definition2}.

Since the current state is not observed by the nodes, it will be difficult to implement state-dependent strategy. Our goal is to design a learning algorithm for a state-independent equilibrium given in Definition \ref{definition1}. In what follows we assume that we are in a setting where the above problem has at least one isolated state-independent equilibrium solution. More details on existence of equilibria can be found in Theorem 3 in \cite{tian}.

\subsection{Learning algorithm}
Suppose that each node $j$  is able to observe a numerical value $\tilde{r}_{j,k}$ of the function $r_{j}(\mathbf{S}_k,\mathbf{a}_k)$ at time $k$, where $\mathbf{a}_k=(a_{j,k},\mathbf{a}_{-j,k})$ is the action  of node $j$ at time $k$. $\hat{a}_{j,k}$ is an intermediary variable. $a_j$, $\Omega_j$ $\phi_j$ represent the amplitude frequency and phase of the sinus perturbation signal given by $b_j\sin(\Omega_j \hat{k}+\phi_j)$, $\tilde{r}_{j,k+1}$ represents the payoff at time $k+1$. The learning algorithm is presented in Algorithm \ref{algorithm1} and is explained below. At each time instant $k$, each node updates its action $a_{j,k}$, by adding the sinus perturbation i.e. $b_j\sin(\Omega_j \hat{k}+\phi_j)$ to the intermediary variable $\hat{a}_{j,k}$ using equation (\ref{eq_Discrete_support}), and makes the action using   $a_{j,k}$. Then, each node  gets a realization of the payoff $\tilde{r}_{j,k+1}$ from the dynamic environment at time $k+1$ which is used to compute $ \hat{a}_{j,k+1}$ using equation (\ref{eq_Discrete_Main}). The action  $a_{j,k+1}$ is then updated using equation (\ref{eq_Discrete_support}). This procedure is repeated for the window $T$.

The algorithm is in discrete time and is given by
\begin{eqnarray}
    a_{j,k}&=&\hat{a}_{j,k}+b_j\sin(\Omega_j \hat{k}+\phi_j)\label{eq_Discrete_support} \\
    \hat{a}_{j,k+1}&=&\hat{a}_{j,k}+ \lambda_k z_j b_j \sin(\Omega_j \hat{k}+\phi_j)\tilde{r}_{j,k+1}\label{eq_Discrete_Main}
\end{eqnarray}

where $\hat{k}:=\sum_{k'=1}^k \lambda_{k'},$ $\Omega_j\neq \Omega_{j'}, \Omega_{j'}+\Omega_j\neq  \Omega_{j''}$   $\forall j,j',j''.$

For almost sure convergence, it is usual to consider vanishing step-size or learning rate such as $\lambda_k=\frac{1}{k+1}.$ However, constant learning rate $\lambda_k=\lambda$ could be more appropriate in some regime. The parameter $\phi_j$ belongs to $[0,2\pi] \forall \; j$, $k\in \mathbb{Z}_{+}$

\begin{algorithm}
\caption{Distributed learning algorithm}
\label{algorithm1}
\begin{algorithmic}[1]
\STATE Each node $j$, initialize $\hat{a}_{j,0}$ and transmit
\STATE Repeat
\STATE Calculate action $a_{j,k}$ according to Equation (\ref{eq_Discrete_support})
\STATE Perform action $a_{j,k}$
\STATE Observe $\tilde{r}_{j,k}$
\STATE Update $\hat{a}_{j,k+1}$ using Equation (\ref{eq_Discrete_Main})
\STATE until  horizon $T$
\end{algorithmic}
\end{algorithm}

\begin{rem}[Learning Scheme in Discrete Time]\label{remark2}
    As we will prove in subsection \ref{Convergence_to_ODE}, the difference equation (\ref{eq_Discrete_support})  can be seen as a discretized version of the learning scheme presented in \cite{Kristic}. But it is for games with state-dependent payoff functions i.e., robust games.
\end{rem}
It should be mentioned here for clarity that the action $a_{j,k}$ of each node $j$ is scalar.

\subsection{Interpretation of the proposed algorithm}

In some sense our algorithm is trying to estimate the gradient of the function $r_{j}(.)$, but we don't have access to the function but just its numerical value. The following equation clearly illustrated the significance of each variable and constant in the algorithm.

\begin{eqnarray}
\highlightedBox{\small\underbrace{\hat{a}_{j,k+1}}_{\substack{\text{New}\\ \text{Value}}} =\underbrace{\hat{a}_{j,k}}_{\substack{\text{Old}\\ \text{Value}}} + \overbrace{\lambda_k}^{\substack{\text{Learning}\\ \text{Rate}}} \underbrace{z_j}_{\substack{\text{Growth}\\ \text{Rate}}} \underbrace{b_j}_{\substack{\text{Perturbation}\\ \text{Amplitude}}} \sin(\overbrace{\Omega_j}^{\substack{\text{Perturbation}\\ \text{Frequency}}}  \hat{k}+\underbrace{\phi_j}_{\substack{\text{Perturbation}\\ \text{Phase}}} )\overbrace{\tilde{r}_{j,k+1}}^{\substack{\text{New}\\ \text{Reward}}}}
\end{eqnarray}

The learning rate $\lambda_k$ can be constant or variable depending on the requirement for the algorithm and system limitations. Perturbation amplitude $b_j>0$ is a small number. $z_j>$ is also a small value which can be varied for fine tuning. Rewriting the above equation we get

\begin{eqnarray}
\frac{\hat{a}_{j,k+1}-\hat{a}_{j,k}}{\lambda_k}&=&  z_j b_j \sin(\Omega_j \hat{k}+\phi_j)\tilde{r}_{j,k+1}
\end{eqnarray}

For vanishing step size as $k\longrightarrow\infty$ $\lambda_k\longrightarrow0$  and the trajectory of the above algorithm coincides with the  trajectory of the ODE in equation (\ref{eq_Stochastic_Main})

\section{Main results}\label{Mainresults}
In this section we present the convergence results as introduced in the contribution section.

We introduce the following assumptions that will be used step by step\footnote{We do not use A1 and A2 simultaneously.}.
\begin{assu}[A\ref{Assumption1}: Vanishing learning rate]\label{Assumption1}
$\lambda_k>0,\ \sum_k\lambda_k=\infty$,  $\sum_k|\lambda_k|^2<\infty$. There exists $C_0>0$ such that
$\mathbb{P}\left( \sup_{k} \parallel a_k\parallel <C_0\right)=1.$
The reason for A1 is that $\lambda_k$ represents the step size of the algorithm. So the sum over all $\sum_k\lambda_k=\infty$ as it needs to traverse over all discrete time. The condition $\sum_k|\lambda_k|^2<\infty$ ensures  bound for the cumulative noise error. This last assumption is for a local stability analysis.
\end{assu}
\begin{assu}[A\ref{Assumption2}: Constant learning rate]\label{Assumption2}
$\lambda_t=\lambda>0,$  $\sup_t[\mathbb{E}\|\mathbf{a}_t\|^2]^{\frac{1}{2}}<+\infty $ and  $\|\mathbf{a}_t\|^2$ is uniformly integrable.
\end{assu}
\begin{assu}[A\ref{Assumption3}:Existence of a local maximizer]\label{Assumption3}
$  \mathbb{E}_\mathbf{S}\frac{\partial r_j(\mathbf{S},\mathbf{a}^*)}{\partial a_j} =0,\  \;    \mathbb{E}_{\mathbf{S}}\frac{\partial^2 r_j(\mathbf{S},\mathbf{a}^*)}{\partial a^2_j} <0.$
These  two conditions tell us that $a^*_j$ is a local maximizer of $a_j \longrightarrow \mathbb{E}_{\mathbf{S}}r_j(\mathbf{S},a_j,\mathbf{a}_{-j}^*)$ where $\mathbf{a}_{-j}^*=(a_{1}^*,\ldots,a_{j-1}^*,a_{j+1}^*,\ldots,a_{t}^*).$
\end{assu}
\begin{assu}[A\ref{Assumption4}: Diagonal Dominance]\label{Assumption4}
the expected payoff has a Hessian that is diagonally dominant at $\mathbf{a}^*$, i.e., {$\left|\mathbb{E}_{\mathbf{S}}\left(\frac{\partial^2r_j(\mathbf{S},\mathbf{a}^*)}{\partial a_j^2}\right)\right|- \sum_{j'\neq j} \left|\mathbb{E}_{\mathbf{S}}\left(\frac{\partial^2r_j(\mathbf{S},\mathbf{a}^*)}{\partial a_j\partial a_{j'}}\right)\right|>0.$}
Note that A4 implies that the Hessian of the expected payoff is invertible at $a^*.$ This assumption is weaker compared to the classical extremum seeking algorithm because the Hessian of $r_j(\mathbf{S},\mathbf{a}^*)$ does not need to be invertible for each $\mathbf{S}.$

We assume  $\mathbf{S}\longmapsto r_j(\mathbf{S},\mathbf{a}) $ is integrable with respect to $\mathbf{S}$ so that the expectation
$\mathbb{E}_{\mathbf{S}}r_j(\mathbf{S},\mathbf{a})$ is finite.
\end{assu}

\subsection{Convergence to ODE} \label{Convergence_to_ODE}
\subsubsection*{Stochastic approximation}
First we need to show that our proposed algorithm converges to the respective ODE almost surely.
We will use a dynamical system viewpoint and stochastic approximation method to analyze our learning algorithm. The idea consists of finding the asymptotic pseudo-trajectory of the algorithm via ordinary differential equation (ODE). To do so, we use the framework initiated by Robbins-Monro\cite{monro1} or \cite{kiefer}. See \cite{benaim,borkar} for recent development. The works in \cite{benaim,borkar}  allows us to find the limiting trajectory of the learning algorithm.

Our scheme can be written as $ \hat{a}_{j,k+1}=\hat{a}_{j,k}+ \lambda_k z_j b_j \sin(\Omega_j \hat{k}+\phi_j)\tilde{r}_{j,k+1}.$
Now we rewrite the above equation in Robbins-Monro \cite{monro1} form as:
$ \hat{a}_{j,k+1}=\hat{a}_{j,k}+\lambda_k\left[ f_j(k,\mathbf{a}_k)+M_{k+1} \right], $
where $f_{j}(k,\mathbf{a}_{k})\triangleq z_j b_j \sin(\Omega_j \hat{k}+\phi_j)\mathbb{E}_{\mathbf{S}}r_{j}( \mathbf{S}, \mathbf{a}_k),$\\$ M_{k+1}\triangleq z_j b_j \sin(\Omega_j \hat{k}+\phi_j)\left[ \tilde{r}_{j,k+1}-\mathbb{E}_{\mathbf{S}}r_{j}( \mathbf{S}, \mathbf{a}_k)\right].$

Since our payoff $r_{j}$ is Lebesgue integrable with respect to $\mathbf{S},$ expectation of payoff function {$\mathbb{E}_{\mathbf{S}}r_{j}( \mathbf{S}, \mathbf{a})$} is finite.  $M_{k+1}$ is clearly a martingale adapted to the filtration $\mathcal{F}_k$ generated by the random variable $\mathbf{S}_{k'}, k'\leq k$ and the
initial law of $\mathbf{a}_0.$ Moreover $M_{k+1}$ has a zero mean. Thus, $M_{k+1}$ is a difference martingale.

\begin{thm}[Variable Learning Rate] \label{Theorem1}
Under Assumption A1, the learning algorithm converges almost surely to the trajectory of a non-autonomous system given by
{%
\begin{eqnarray*}
\frac{d}{dt}\hat{a}_{j,t}&=& z_j b_j \sin(\Omega_j t+\phi_j)\mathbb{E}_{{\mathbf{S}}}\left(r_{j}(\mathbf{S},\mathbf{a}_t)\right)\\
a_{j,t}&=&\hat{a}_{j,t}+b_j \sin(\Omega_j t+\phi_j)
\end{eqnarray*}
}
\end{thm}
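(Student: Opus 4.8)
The plan is to apply the ODE method for stochastic approximation, in the asymptotic pseudo-trajectory form of Bena\"{\i}m and Borkar cited in the text, to the recursion already put in Robbins--Monro form $\hat{a}_{j,k+1}=\hat{a}_{j,k}+\lambda_k[f_j(k,\mathbf{a}_k)+M_{k+1}]$. The only nonstandard feature is that the drift $f_j(k,\mathbf{a}_k)=z_jb_j\sin(\Omega_j\hat{k}+\phi_j)\,\mathbb{E}_{\mathbf{S}}r_j(\mathbf{S},\mathbf{a}_k)$ carries an explicit dependence on the index $k$ through $\hat{k}=\sum_{k'=1}^{k}\lambda_{k'}$, so the limit object is a \emph{non-autonomous} ODE. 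The accumulated step size $\hat{k}$ is exactly the continuous time that converts the discrete phase $\sin(\Omega_j\hat{k}+\phi_j)$ into $\sin(\Omega_j t+\phi_j)$, and this observation drives the whole argument.

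First I would verify the three ingredients the ODE method needs. For \emph{boundedness}, Assumption A1 supplies $\mathbb{P}(\sup_k\|a_k\|<C_0)=1$, so the iterates almost surely stay in a fixed compact set $K$. For \emph{regularity of the drift}, smoothness of $r_j$ and compactness of $\mathcal{S}$ make $\mathbf{a}\mapsto\mathbb{E}_{\mathbf{S}}r_j(\mathbf{S},\mathbf{a})$ Lipschitz on $K$, while $t\mapsto\sin(\Omega_j t+\phi_j)$ has bounded derivative; hence the time-dependent field $g_j(t,\mathbf{a})=z_jb_j\sin(\Omega_j t+\phi_j)\mathbb{E}_{\mathbf{S}}r_j(\mathbf{S},\mathbf{a})$ is jointly continuous and Lipschitz in $\mathbf{a}$ uniformly in $t$, guaranteeing a unique solution flow. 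For the \emph{noise}, I use the fact already established in the excerpt that $M_{k+1}$ is a martingale difference with $\mathbb{E}[M_{k+1}\mid\mathcal{F}_k]=0$; it remains to bound its conditional second moment, and here compactness of $\mathcal{S}$ together with the a.s. boundedness of the iterates controls the payoff and hence gives $\mathbb{E}[\|M_{k+1}\|^2\mid\mathcal{F}_k]\le C$ uniformly.

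Next I would eliminate the noise. Since $\sum_k\lambda_k^2<\infty$ by A1 and the conditional second moments of $M_{k+1}$ are uniformly bounded, the martingale $\sum_k\lambda_k M_{k+1}$ has summable quadratic variation and therefore converges almost surely by the martingale convergence theorem. Consequently the cumulative perturbation over any window of the interpolated time axis vanishes as the starting index grows, which is precisely the Kushner--Clark noise condition. I would then build the piecewise-affine interpolant $\bar{a}(\cdot)$ of $(\hat{a}_{j,k})$ over the time grid $\{\hat{k}\}$ and, to accommodate the explicit time dependence, augment the state by a clock coordinate $\theta$ with $\dot{\theta}=1$ so that $(\hat{a},\theta)$ solves an autonomous system to which the general pseudo-trajectory theorem applies. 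This yields that $\bar{a}(\cdot)$ is almost surely an asymptotic pseudo-trajectory of the flow, i.e. it tracks the solution of $\frac{d}{dt}\hat{a}_{j,t}=z_jb_j\sin(\Omega_j t+\phi_j)\mathbb{E}_{\mathbf{S}}r_j(\mathbf{S},\mathbf{a}_t)$, and the algebraic identity $a_{j,k}=\hat{a}_{j,k}+b_j\sin(\Omega_j\hat{k}+\phi_j)$ passes to the limit to give the second equation of the stated system.

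The step I expect to be the main obstacle is the rigorous handling of the non-autonomous drift, because the ODE method as cited is phrased for autonomous vector fields: one must justify that using $\hat{k}$ as the interpolation clock faithfully reproduces the phase $\Omega_j t$ in the limit and that the clock augmentation preserves the Lipschitz and boundedness hypotheses. The secondary difficulty is the uniform second-moment bound on $M_{k+1}$, which rests on compactness of $\mathcal{S}$ and on A1 to tame the \emph{state-dependent, Markovian} (non-i.i.d.) noise; it is there that the difference-martingale structure, rather than independence, does the essential work.
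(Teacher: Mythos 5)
Your proposal is correct and takes essentially the same route as the paper: rewriting the recursion in Robbins--Monro form, establishing Lipschitz continuity of $\mathbf{a}\longmapsto\mathbb{E}_{\mathbf{S}}r_j(\mathbf{S},\mathbf{a})$ (the paper's Lemma \ref{lemma1} and Remarks \ref{remark4}--\ref{remark5}, with $|\sin(\cdot)|\leq 1$ giving uniformity in $t$), verifying the martingale-difference noise with a conditional second-moment bound (the paper's Lemma \ref{lemma2} proves linear growth $\acute{c}(1+\|\mathbf{a}_k\|^2)$, which under A1's almost-sure boundedness is equivalent to your uniform bound), and then affine interpolation with tracking via Borkar's Chapter 2, Lemma 1. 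Your explicit clock-augmentation device for the non-autonomous drift is a slightly more careful rendering of what the paper simply invokes as a ``discrete adaptation'' of that lemma, not a genuinely different method.
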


The gap between the interpolated version of algorithm and the solution of the ODE is bounded by\\
{
\begin{eqnarray*} \sup_{t\in[t_k, t_k+T]} \|\bar{\mathbf{a}}(t)-\mathbf{a}^{t_k}(t)  \| \leq K_{T,t}e^{LT}+C_T\lambda_{t+k} \end{eqnarray*}} which vanishes, where $\bar{\mathbf{a}}(t)$ is the interpolated version of the algorithm and $\mathbf{a}^{t_k}(t)$ is the solution of the ODE at time $t$ starting from $t_k:=\sum_{t'=0}^{k} \lambda_{t'},$ where $L$ is the Lipschitz constant for the ODE and $T$ is the time window. $K_{T,t}$ is specified below.

In order to calculate the bound we need to define a few terms which are helpful in obtaining a compact form of the bound.
{
  \begin{eqnarray}
  K_{T,t}&\triangleq&C_T L \sum_{k\geq0}\lambda_{t+k}^2+\sup_{k\geq0}\|\delta_{t,t+k}\|\\
  \delta_{t,t+k}&\triangleq&\xi_{t+k}-\xi_{t}\\
  \xi_{t}&\triangleq&\sum_{m=0}^{t-1}\lambda_m M_{m+1}\\
  C_T&\triangleq&\|r(0)\|+L(C_0+\|r(0)\|T)e^{LT}<\infty\\ \nonumber
    \end{eqnarray}
}

To prove that the learning algorithm (discrete ODE) converges to the ODE we need to verify conditions from Borkar \cite{borkar} Chapter 2 Lemma 1.
\begin{eqnarray*}
    \lim_{t\longrightarrow\infty}\sup_{s\in[t,t+T]}\|\tilde{a}_s-a_s^*\|&=&0 \; a.s.
\end{eqnarray*}


This is an important result as it gives us an approximation on the error between our algorithm and the corresponding ODE.

\begin{thm}[Fixed Learning Rate] \label{Theorem2}
Under Assumption A2, the learning algorithm converges in distribution when $\lambda \longrightarrow 0,$ to the trajectory of a non-autonomous system given by
\begin{eqnarray}
    \frac{d}{dt}\hat{a}_{j,t}&=& z_j b_j \sin(\Omega_j t+\phi_j)\mathbb{E}_{\mathbf{S}}\left(r_j( \mathbf{S}, {\mathbf{a}}_t)\right)\\
    a_{j,t}&=&\hat{a}_{j,t}+b_j \sin(\Omega_j t+\phi_j)
\end{eqnarray}
    Moreover the error gap is in order of $\lambda$. As $\lambda$ converges to zero, the algorithm converges (in distribution) to the ODE.
\end{thm}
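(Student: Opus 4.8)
The plan is to establish the result by the weak-convergence (\emph{in distribution}) form of the ODE method for constant step size, following the constant-stepsize development in \cite{borkar}. Since the step size no longer vanishes, one cannot hope for the almost sure convergence of Theorem~\ref{Theorem1}; the natural object is instead the family of interpolated processes indexed by $\lambda$, and the goal is to show this family converges weakly, as $\lambda\to0$, to the deterministic solution of the limiting ODE. Concretely, for fixed $\lambda$ I would define the piecewise-linear interpolation $\bar{\mathbf{a}}^\lambda(\cdot)$ of the iterates $\hat{\mathbf{a}}_k$ on the grid $t_k=k\lambda$, reusing the Robbins--Monro decomposition $\hat a_{j,k+1}=\hat a_{j,k}+\lambda[f_j(k,\mathbf{a}_k)+M_{k+1}]$ already set up before Theorem~\ref{Theorem1}, where $f_j$ is the averaged drift and $M_{k+1}$ the martingale difference.

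First I would prove tightness of $\{\bar{\mathbf{a}}^\lambda\}_{\lambda>0}$ in $C([0,T],\mathbb{R}^N)$. Assumption A\ref{Assumption2} is tailored to exactly this: the uniform bound $\sup_t[\mathbb{E}\|\mathbf{a}_t\|^2]^{1/2}<\infty$ together with uniform integrability of $\|\mathbf{a}_t\|^2$ controls the increments $\mathbb{E}\|\bar{\mathbf{a}}^\lambda(t)-\bar{\mathbf{a}}^\lambda(s)\|^2\le C|t-s|+o(1)$, so the Aldous--Kurtz criterion yields relative compactness. Next I would identify every weak limit point: the drift accumulates to $\int_0^t z_j b_j\sin(\Omega_j s+\phi_j)\mathbb{E}_{\mathbf{S}}r_j(\mathbf{S},\mathbf{a}(s))\,ds$ because $\mathbf{a}\mapsto\mathbb{E}_{\mathbf{S}}r_j(\mathbf{S},\mathbf{a})$ is Lipschitz (smoothness of $r_j$ plus the moment bound confines the iterates to a bounded set), while the martingale part vanishes in the limit since over $[0,T]$ there are $\approx T/\lambda$ steps with accumulated quadratic variation of order $\lambda^2\cdot(T/\lambda)=\lambda T\to0$, using the boundedness of $\sin(\cdot)$ and the second-moment control of $\tilde r_{j,k+1}-\mathbb{E}_{\mathbf{S}}r_j(\mathbf{S},\mathbf{a}_k)$. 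Any limit therefore solves the stated non-autonomous ODE; since its right-hand side is Lipschitz in $\mathbf{a}$ the solution is unique, so the whole family converges in distribution to this single deterministic trajectory, which upgrades to convergence in probability.

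For the order-$\lambda$ error gap I would run the deterministic Gronwall estimate already displayed for Theorem~\ref{Theorem1}, now specialized to $\lambda_{t+k}\equiv\lambda$. Decomposing $\bar{\mathbf{a}}^\lambda(t)-\mathbf{a}(t)$ into the Euler discretization error, the Lipschitz drift mismatch (governed by $L$), and the martingale remainder $\xi$, Gronwall's inequality produces a bound of the form $K_{T}e^{LT}+C_T\lambda$. The discretization and drift-bias contribution is genuinely $O(\lambda)$; the martingale remainder, whose accumulated quadratic variation is $O(\lambda T)$ by the estimate above, is controlled in $L^2$ and vanishes as $\lambda\to0$, so the surviving deterministic gap is of order $\lambda$, as claimed.

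The main obstacle is identifying the weak limit in the presence of \emph{state-dependent (Markovian)} noise rather than i.i.d.\ noise: the entire argument hinges on the decomposition having rendered $M_{k+1}$ a genuine martingale difference with respect to $\mathcal{F}_k$, so that the correlated term $\eta_{j}=r_j(\mathbf{S},\mathbf{a})-\mathbb{E}_{\mathbf{S}}r_j(\mathbf{S},\mathbf{a})$ is absorbed into a centered quantity whose averaged contribution is zero. Verifying this centering, together with the second-moment and uniform-integrability hypotheses of A\ref{Assumption2} for the realized process $\tilde r_{j,k+1}$ — and hence the tightness bound — is the delicate point; once the martingale part is shown to vanish weakly, the remaining steps reduce to the standard deterministic ODE comparison and the Gronwall estimate, paralleling \cite{borkar,benaim}.
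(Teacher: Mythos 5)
Your proof of the convergence-in-distribution claim is sound but takes a genuinely different route from the paper. You use the Kushner-style weak-convergence method: tightness of the family of interpolated processes $\{\bar{\mathbf{a}}^\lambda\}$ in $C([0,T])$ via Aldous--Kurtz, identification of every weak limit as a solution of the non-autonomous ODE, and uniqueness of that solution by Lipschitzness of the averaged drift. The paper instead proves a single quantitative estimate (its Proposition~\ref{labelProposition1}): it affinely interpolates the iterates on the grid $T_t=t\lambda$, telescopes the Robbins--Monro recursion into a drift integral plus the cumulative noise $\xi_t=\lambda\sum_k M_{k+1}$, bounds the martingale part with Burkholder's inequality, and closes with the discrete Gronwall inequality to get
\begin{equation*}
\mathbb{E}\Bigl[\sup_{t\in[0,T]}\|\tilde{\hat{\mathbf{a}}}_t-\hat{\mathbf{a}}_t\|^2\Bigr]^{\frac{1}{2}}\leq \tilde{C}_T\sqrt{\lambda},
\end{equation*}
from which convergence in distribution as $\lambda\to0$ follows at once. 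Your route needs less moment bookkeeping and is more portable, but produces no rate; the paper's direct $L^2$ estimate yields both the rate and a convergence mode that is in fact stronger than distributional on a fixed horizon. Your tightness and limit-identification steps under A\ref{Assumption2}, and your observation that the martingale structure (rather than i.i.d.\ noise) is what makes the averaged limit correct, are consistent with the paper's setup.

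The genuine gap is your final rate claim. You assert the error gap is ``genuinely $O(\lambda)$'' because the martingale remainder ``vanishes as $\lambda\to0$,'' but that is a non sequitur: for any fixed $\lambda$ the bound must account for the noise term, and your own computation gives accumulated quadratic variation $O(\lambda T)$, i.e.\ an $L^2$ contribution of order $\sqrt{\lambda}$, which \emph{dominates} the $O(\lambda)$ Euler/drift-bias term for small $\lambda$. Gronwall cannot erase it --- it enters the Gronwall constant and survives; in the paper's proof this is explicit, with $C=\lambda T K_1\sqrt{1+C_0^2}+\sqrt{\lambda K_2(1+C_0^2)}$, whose second term is the dominant one, giving the stated $\tilde{C}_T\sqrt{\lambda}$. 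So what the machinery (yours and the paper's alike) actually delivers is an error gap of order $\sqrt{\lambda}$, not $\lambda$; the theorem's loose phrase ``in order of $\lambda$'' is established by the paper only in the sense of a gap of order $\sqrt{\lambda}$ that vanishes with $\lambda$, and your write-up should either weaken the claim to $O(\sqrt{\lambda})$ or supply an argument (e.g.\ a bias-only comparison of means, or a higher-order averaging step) that genuinely separates the deterministic $O(\lambda)$ discretization error from the stochastic fluctuation --- neither of which is in your proposal as written.
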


The advantage of Theorem \ref{Theorem2} compared to Theorem \ref{Theorem1} is the convergence time. The number of iterations required to reach a fixed time $T$ is less with constant learning rate than the vanishing learning rate. However, the convergence notion under constant step size is weaker (it is in distribution) compared to the almost surely convergence with vanishing learning rate. So there is a sort of tradeoff between almost sure convergence and convergence time.

Let $\Delta_t$ be the gap between the ODE and the isolated equilibrium at time $t.$

\begin{thm} [Exponential Stability]\label{Theorem3}
    Assume A3-A4 and Remark \ref{remark4},\ref{remark5} holds. Then,
    there exist $\acute{M}, \acute{m} > 0$ and
    $\bar \epsilon, \bar{b}_j$ such that, for all $\epsilon \in (0,\bar{\epsilon})$ and  $b_j \in (0,\bar{b}_j)$, if the initial gap is $\Delta_0$ (which is small)
    then for all time $t,$
    \begin{eqnarray} \label{ineqkrstic}
   \Delta_t \leq  y_{1,t}
 \end{eqnarray}
where
\begin{eqnarray} \label{ineqkrstic2}
        y_{1,t}& \triangleq &\acute{M}e^{-\acute{m} t} \Delta_0+O(\epsilon+\max_j  b_j^3)
\end{eqnarray}
\end{thm}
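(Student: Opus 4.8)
The plan is to collapse the fast sinusoidal perturbations by \emph{averaging}, to read off exponential stability of the resulting autonomous system from the Gershgorin structure implied by A3--A4, and then to push this stability back onto the original non-autonomous ODE of Theorem~\ref{Theorem1}, absorbing all approximation errors into the residual of \eqref{ineqkrstic2}. Concretely, I would first pass to the error coordinate $\tilde a_{j,t}:=\hat a_{j,t}-a_j^{*}$ and write the right-hand side $z_j b_j\sin(\Omega_j t+\phi_j)\,\mathbb{E}_{\mathbf S}r_j(\mathbf S,\hat{\mathbf a}_t+\mathbf b\sin(\Omega t+\phi))$ in slow/fast form, treating the $\sin(\Omega_j t+\phi_j)$ as the fast oscillation. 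Taylor expanding $r_j$ in the perturbation about $\hat{\mathbf a}_t$ and averaging over the oscillation, the distinctness and non-resonance conditions $\Omega_j\neq\Omega_{j'}$ and $\Omega_j+\Omega_{j'}\neq\Omega_{j''}$ kill every cross term, since $\sin_j\sin_i$ and $\sin_j\sin_i\sin_l$ all have zero mean; only the self-demodulation $\sin_j^2$ survives. This produces the averaged gradient flow $\dot{\hat a}_j^{\mathrm{av}}=\tfrac12 z_j b_j^2\,\mathbb{E}_{\mathbf S}\partial_{a_j}r_j(\mathbf S,\hat{\mathbf a}^{\mathrm{av}})$ up to higher-order perturbation terms.

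Next I would locate and linearize the averaged equilibrium. Because A3 gives $\mathbb{E}_{\mathbf S}\partial_{a_j}r_j(\mathbf S,\mathbf a^{*})=0$ while A4 makes the expected-payoff Hessian invertible at $\mathbf a^{*}$, the implicit function theorem yields a unique equilibrium $\hat{\mathbf a}^{e}$ of the averaged system lying within a distance of $\mathbf a^{*}$ that the higher-order perturbation terms control; carrying these terms through gives the bias $O(\max_j b_j^{3})$ appearing in \eqref{ineqkrstic2}. Linearizing the averaged field at $\hat{\mathbf a}^{e}$ gives the Jacobian $J=D\,G$ with $D=\mathrm{diag}\!\left(\tfrac12 z_j b_j^2\right)\succ 0$ and $G_{jl}=\mathbb{E}_{\mathbf S}\partial^2_{a_j a_l}r_j(\mathbf S,\mathbf a^{*})$. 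A row-wise positive scaling preserves diagonal dominance, so A3 makes the diagonal entries $J_{jj}=\tfrac12 z_j b_j^2\,\mathbb{E}_{\mathbf S}\partial^2_{a_j}r_j<0$ and A4 gives $|J_{jj}|>\sum_{l\neq j}|J_{jl}|$; Gershgorin's disk theorem then confines every eigenvalue to a left-half-plane disk, so $J$ is Hurwitz and $\hat{\mathbf a}^{e}$ is exponentially stable with some rate $\acute m>0$ and transient constant $\acute M$.

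Finally I would transport this back to the true trajectory. Under the technical hypotheses collected in Remarks~\ref{remark4} and~\ref{remark5}, exponential stability of the averaged equilibrium upgrades the finite-horizon averaging estimate to a \emph{uniform-in-time} $O(\epsilon)$ bound between the solution of the ODE and that of the averaged system (the standard Lyapunov-based averaging argument, where $\epsilon$ measures the time-scale separation induced by the perturbation periods). Adding the exponentially decaying averaged error $\acute M e^{-\acute m t}\Delta_0$, the $O(\epsilon)$ averaging gap, and the $O(\max_j b_j^{3})$ equilibrium bias, and shrinking $\bar\epsilon$ and the $\bar b_j$ so that the implicit function theorem and the linearization remain valid, gives exactly $\Delta_t\le \acute M e^{-\acute m t}\Delta_0+O(\epsilon+\max_j b_j^{3})$.

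The main obstacle is the rigorous \emph{uniform-in-$t$} averaging: the elementary averaging lemma only controls the gap on bounded horizons, so one must use the exponential stability obtained in the Hurwitz step to close a Gronwall or converse-Lyapunov estimate that keeps the approximation valid for all $t$, all while carrying the $O(\max_j b_j^{3})$ bias and respecting the state-dependent (Markovian) nature of $r_j$ without letting any resonance reappear in the averaged field.
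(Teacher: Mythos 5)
Your proposal is correct and takes essentially the same route as the paper: the paper's own ``proof'' is a one-line sketch deferring to the averaging-based local stability analysis of Liu and Krstic \cite{ShuJunLiu2011} (following \cite{Kristic}), and your argument---killing the cross terms via the non-resonance conditions, obtaining the averaged flow with Jacobian $\mathrm{diag}\bigl(\tfrac12 z_j b_j^2\bigr)G$, invoking A3--A4 with Gershgorin to conclude Hurwitz stability, and upgrading finite-horizon averaging to a uniform-in-time $O(\epsilon+\max_j b_j^3)$ bound via the exponential stability of the averaged system---is precisely a reconstruction of the steps in that reference. One minor remark: the ODE of Theorem \ref{Theorem1} already involves the expected payoff $\mathbb{E}_{\mathbf S}r_j$, so the Markovian state-dependence you flag as a residual obstacle has been averaged out before Theorem \ref{Theorem3} applies (it is handled separately, in Theorem \ref{Theorem4}, by stochastic averaging).
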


\begin{proof} [Sketch of Proof of Theorem \ref{Theorem3}]\label{proofthm3}

    Local stability proof of Theorem \ref{Theorem3} follows the steps in \cite{ShuJunLiu2011}.

\end{proof}

From the above equation it is clear that as time goes to infinity the first term in $y_{1,t}$  bound vanishes exponentially and the error is bounded by the amplitude of the sinus perturbation i.e. $O(\epsilon+\max_j  b_j^3)$. This means that the solution of ODE converges locally exponentially to the state-independent equilibrium action $\mathbf{a}^*$ provided the initial solution is relatively close.

\begin{defi}[$\epsilon-$Nash equilibrium payoff point]\label{definition3}
An  $\epsilon-$Nash equilibrium point in state-independent strategy is a strategy profile such that no node can improve its payoff more than $\epsilon$ by  unilateral deviation.
\end{defi}

\begin{defi}[$\epsilon-$close Nash equilibrium strategy point]\label{definition4}
An  $\epsilon-$close Nash equilibrium point in state-independent strategy is a strategy profile such that the Euclidean distance to a Nash equilibrium is less than $\epsilon.$
\end{defi}
A $\epsilon-$close Nash equilibrium point is an approximate Nash point with a precision at most $\epsilon.$

It is not difficult to see that for Lipschitz continuous payoff functions, an  $\epsilon-$close Nash equilibrium is an $L\epsilon-$Nash equilibrium point where $L$ is the Lipschitz constant.

Next corollary shows that one can get an $\epsilon-$close Nash equilibrium in finite time.

\begin{coro} [Convergence Time] \label{Corollary1}
    Assume  A3-A4 and Remark \ref{remark4},\ref{remark5} holds. Then, the ODE reaches a
    $(2\epsilon+\max_j b_j^3)-$close to a Nash equilibrium in at most $T$ time units where
    $        T =   \frac{1}{\acute{m}} \log(\frac{\Delta_0 \acute{M}}{\epsilon}) $
\end{coro}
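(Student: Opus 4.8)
The plan is to derive the convergence time directly from the exponential stability bound established in Theorem~\ref{Theorem3}. Recall that Theorem~\ref{Theorem3} gives, for the gap $\Delta_t$ between the ODE trajectory and the isolated equilibrium,
\begin{eqnarray*}
\Delta_t \leq \acute{M}e^{-\acute{m} t}\Delta_0 + O(\epsilon+\max_j b_j^3).
\end{eqnarray*}
The residual term $O(\epsilon+\max_j b_j^3)$ is a fixed floor that the trajectory cannot beat, so the natural target is to reach a neighborhood whose radius is this floor plus one extra $\epsilon$ coming from the exponentially decaying transient. First I would fix the goal: we want $t$ large enough that the transient term $\acute{M}e^{-\acute{m}t}\Delta_0$ has itself shrunk to at most $\epsilon$, since then the total gap is at most $\epsilon + O(\epsilon+\max_j b_j^3)$, which we absorb into the stated $(2\epsilon+\max_j b_j^3)$-closeness (with the implied constant in the $O(\cdot)$ normalized to one).

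The core computation is elementary. Setting the transient term equal to the tolerance $\epsilon$,
\begin{eqnarray*}
\acute{M}e^{-\acute{m}t}\Delta_0 = \epsilon
\quad\Longleftrightarrow\quad
e^{\acute{m}t} = \frac{\Delta_0\acute{M}}{\epsilon}
\quad\Longleftrightarrow\quad
t = \frac{1}{\acute{m}}\log\!\left(\frac{\Delta_0\acute{M}}{\epsilon}\right).
\end{eqnarray*}
Because $e^{-\acute{m}t}$ is monotonically decreasing in $t$, for every $t \geq T$ with $T$ as defined in the statement we have $\acute{M}e^{-\acute{m}t}\Delta_0 \leq \epsilon$. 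Substituting back into the Theorem~\ref{Theorem3} bound yields $\Delta_t \leq \epsilon + O(\epsilon+\max_j b_j^3) \leq 2\epsilon + \max_j b_j^3$ for all $t\geq T$, which is precisely the claimed $(2\epsilon+\max_j b_j^3)$-closeness to the Nash equilibrium. I would also note that $T$ is finite and positive under the hypothesis that the initial gap $\Delta_0$ is small but nonzero, and that $\Delta_0\acute{M}/\epsilon > 1$ in the regime of interest so the logarithm is positive.

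The only genuinely delicate point is bookkeeping on the constants rather than any real obstacle. One must check that the $O(\epsilon+\max_j b_j^3)$ term can legitimately be bounded by $\epsilon + \max_j b_j^3$ after the transient has decayed; this requires that the hidden constant in the big-$O$ is controlled, which is inherited from the stability estimate in Theorem~\ref{Theorem3} and the assumptions A3--A4 together with Remarks~\ref{remark4} and \ref{remark5}. I expect this constant-absorption step to be the main place to be careful, since the factor of $2$ in the stated precision is exactly what accommodates the sum of the decayed transient ($\epsilon$) and the normalized floor ($\epsilon$), leaving the $\max_j b_j^3$ term to stand on its own. Everything else is a direct inversion of the exponential bound, so the corollary follows immediately once Theorem~\ref{Theorem3} is in hand.
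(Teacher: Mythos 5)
Your proposal is correct and follows exactly the route the paper intends: its proof of Corollary~\ref{Corollary1} is a one-line sketch stating that the result follows from inequality (\ref{ineqkrstic}) of Theorem~\ref{Theorem3}, and your inversion of the exponential bound, with the transient set to $\epsilon$ and the $O(\epsilon+\max_j b_j^3)$ floor (constant normalized to one) absorbed into the $(2\epsilon+\max_j b_j^3)$ radius, is precisely the computation that sketch leaves implicit. Your remark about controlling the hidden big-$O$ constant is an honest flag of a detail the paper itself glosses over, not a deviation from its argument.
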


\begin{proof}[Sketch of Proof for Corollary \ref{Corollary1}]
    The proof follows from the inequality (\ref{ineqkrstic}) in Theorem \ref{Theorem3}.
\end{proof}

\begin{coro}[Convergence to the ODE] \label{Corollary2}
    Under Assumption A1, A3, and A4, the following inequality holds almost surely:
    $  \parallel \tilde{\mathbf{a}}_t -\mathbf{a}^*\parallel \leq  y_{1,t}+y_{2,t}  $
\end{coro}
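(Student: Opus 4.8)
The plan is to obtain the estimate by a triangle inequality that inserts the solution of the limiting ODE between the interpolated algorithm and the equilibrium. I would write
\[
\|\tilde{\mathbf{a}}_t-\mathbf{a}^*\|\ \leq\ \|\tilde{\mathbf{a}}_t-\mathbf{a}^{t_k}(t)\|\ +\ \|\mathbf{a}^{t_k}(t)-\mathbf{a}^*\|,
\]
where $\mathbf{a}^{t_k}(t)$ denotes the solution of the non-autonomous ODE of Theorem \ref{Theorem1} started at time $t_k=\sum_{t'=0}^{k}\lambda_{t'}$, and $\tilde{\mathbf{a}}_t$ is the interpolated iterate (the quantity Theorem \ref{Theorem1} writes $\bar{\mathbf{a}}(t)$). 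The first term measures how far the stochastic iterate is from the ODE trajectory, and the second measures how far that trajectory is from the Nash point $\mathbf{a}^*$.

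First I would control the second term using Theorem \ref{Theorem3}: under A3--A4 the ODE gap $\Delta_t=\|\mathbf{a}^{t_k}(t)-\mathbf{a}^*\|$ obeys $\Delta_t\leq y_{1,t}=\acute{M}e^{-\acute{m}t}\Delta_0+O(\epsilon+\max_j b_j^3)$, provided the initial gap is small enough to lie in the basin of attraction of the isolated equilibrium. Then I would control the first term using Theorem \ref{Theorem1}: under A1 the interpolated algorithm converges almost surely to the ODE trajectory with the explicit gap estimate already displayed, so I would set $y_{2,t}\triangleq K_{T,t}e^{LT}+C_T\lambda_{t+k}$, i.e.\ exactly the quantity shown to bound $\sup_{t\in[t_k,t_k+T]}\|\bar{\mathbf{a}}(t)-\mathbf{a}^{t_k}(t)\|$ and to vanish almost surely as $k\to\infty$. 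Adding the two bounds yields $\|\tilde{\mathbf{a}}_t-\mathbf{a}^*\|\leq y_{1,t}+y_{2,t}$ almost surely, which is the claim.

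The hard part will be reconciling the two regimes so that the triangle inequality is legitimate on a single event of probability one. Theorem \ref{Theorem3} is a deterministic local stability statement that presupposes the ODE initial condition $\Delta_0$ sits in the basin of attraction, whereas Theorem \ref{Theorem1} is a stochastic-approximation result whose gap bound is attached to the sliding window $[t_k,t_k+T]$. I therefore need two alignments: (i) the boundedness furnished by A1, namely $\mathbb{P}(\sup_k\|a_k\|<C_0)=1$, to guarantee that almost every sample path eventually enters the region where the local exponential estimate of Theorem \ref{Theorem3} is valid; and (ii) a consistent choice of anchoring time $t_k$ and window $T$ so that the trajectory $\mathbf{a}^{t_k}(t)$ appearing in both terms is literally the same object. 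Once these alignments are in place, the rest is bookkeeping: the perturbation-dependent residual $O(\epsilon+\max_j b_j^3)$ inside $y_{1,t}$ and the vanishing stochastic-approximation error $y_{2,t}$ simply add, and neither interferes with the other.
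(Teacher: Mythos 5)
Your proof is correct and follows essentially the same route as the paper: the paper's own proof is exactly the triangle inequality $\parallel \tilde{\mathbf{a}}_t -\mathbf{a}^*\parallel \leq \parallel\tilde{\mathbf{a}}_t -\mathbf{a}_t\parallel +\parallel {\mathbf{a}}_t -\mathbf{a}^*\parallel$, bounding the algorithm-to-ODE term by Theorem \ref{Theorem1} and the ODE-to-equilibrium term by Theorem \ref{Theorem3}. In fact your attribution (Theorem \ref{Theorem1} yielding $y_{2,t}$ and Theorem \ref{Theorem3} yielding $y_{1,t}$) is the consistent one given the definitions in equations (\ref{ineqkrstic2}) and (\ref{ineqBorkar}) --- the paper's one-line proof swaps the two labels, a harmless slip since the bounds are summed --- and your remarks on anchoring $t_k$ and the basin of attraction merely make explicit bookkeeping the paper leaves implicit.
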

where
\begin{eqnarray} \label{ineqBorkar}
y_{2,t} \triangleq   C_T (\lambda_{t+k}+L\sum_{k'\geq0}\lambda_{t+k'}^2 )+\sup_{k'\geq0}\|\delta_{t,t+k'}\|
\end{eqnarray}

\begin{proof}[Proof of Corollary \ref{Corollary2}]
The proof uses the triangle inequality  $\parallel \tilde{\mathbf{a}}_t -\mathbf{a}^*\parallel \leq \parallel\tilde{\mathbf{a}}_t -\mathbf{a}_t\parallel +\parallel {\mathbf{a}}_t -\mathbf{a}^*\parallel.$ By Theorem \ref{Theorem1}, one gets $ \parallel\tilde{\mathbf{a}}_t -\mathbf{a}_t\parallel \leq y_{1,t}$ and by Theorem \ref{Theorem3}, one has $\parallel {\mathbf{a}}_t -\mathbf{a}^*\parallel \leq y_{2,t}$
Combining together, one arrives at the announced result.
\end{proof}


Then constants in equation (\ref{ineqkrstic2}) and (\ref{ineqBorkar}) depends on the number of players and the dimension of the action space.


\subsection{Convergence of the stochastic ODE}

In this subsection we study the stochastic ODE given by

\begin{eqnarray}
    a_{j,t}&=&\hat{a}_{j,t}+b_j\sin(\Omega_j t+\phi_j)\label{eq_Stochastic_support}\\
    \frac{d}{dt}\hat{a}_{j,t}&=&  z_j b_j \sin(\Omega_j t+\phi_j)\tilde{r}_{j,t}\label{eq_Stochastic_Main}
\end{eqnarray}

where $r_{j,t}$ is the realization of the state-dependent payoff $r_j(\mathbf{S}_t,\mathbf{a}_t)$ at time $t.$ We assume the state process is ergodic so that,
$$
\lim_{T\longrightarrow \infty}\frac{1}{T}\int_0^T   \mu_{j}(t)r_j(\mathbf{S}_t,\mathbf{a}_t) \ dt=\lim_{T\longrightarrow \infty}\frac{1}{T}\int_0^T   \mu_{j}(t)\mathbb{E}_\mathbf{S}r_j(\mathbf{S},\mathbf{a}_t) \ dt
$$
In particular the asymptotic drift of the deterministic ODE and the stochastic ODE are the same. Hence, the following theorem follows:

\begin{thm}[Almost sure exponential stability]\label{Theorem4}
The stochastic algorithm  \ref{algorithm1} converges asymptotically almost surely to the stochastic ODE in equation (\ref{eq_Stochastic_Main})
i.e.
$$P( \parallel \tilde{\mathbf{a}}_t -\mathbf{a}^*\parallel \leq  y_{1,t}+y_{2,t} )=1\; a.s.$$

\end{thm}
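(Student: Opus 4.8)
The plan is to mirror the decomposition used in Corollary \ref{Corollary2}, but to replace the expected payoff appearing in the drift by the realized state-dependent payoff and to absorb the resulting correlated noise through the ergodicity hypothesis. Writing the triangle inequality
$$\|\tilde{\mathbf{a}}_t-\mathbf{a}^*\|\leq\|\tilde{\mathbf{a}}_t-\mathbf{a}_t\|+\|\mathbf{a}_t-\mathbf{a}^*\|,$$
I would bound the first term (the gap between the stochastic algorithm and the stochastic ODE (\ref{eq_Stochastic_Main})) by $y_{2,t}$ and the second term (the gap between the stochastic ODE and the isolated equilibrium) by $y_{1,t}$. The only genuinely new point relative to Corollary \ref{Corollary2} is that the drift of (\ref{eq_Stochastic_Main}) is driven by the instantaneous realization $\tilde r_{j,t}=r_j(\mathbf{S}_t,\mathbf{a}_t)$ rather than by its expectation $\mathbb{E}_{\mathbf{S}}r_j(\mathbf{S},\mathbf{a}_t)$, so I must show that this substitution is harmless in the long run.

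First I would exploit the ergodicity assumption stated just above the theorem. Setting $\mu_j(t)=z_jb_j\sin(\Omega_j t+\phi_j)$, the assumption gives
$$\lim_{T\to\infty}\frac{1}{T}\int_0^T\mu_j(t)\,r_j(\mathbf{S}_t,\mathbf{a}_t)\,dt=\lim_{T\to\infty}\frac{1}{T}\int_0^T\mu_j(t)\,\mathbb{E}_{\mathbf{S}}r_j(\mathbf{S},\mathbf{a}_t)\,dt,$$
so the time-averaged drift of (\ref{eq_Stochastic_Main}) coincides almost surely with the drift of the deterministic ODE of Theorem \ref{Theorem1}. This places the slow variable $\hat{\mathbf{a}}_t$ in the classical two--timescale averaging regime: the fast state $\mathbf{S}_t$ is integrated out and, along almost every sample path, the trajectory of (\ref{eq_Stochastic_Main}) is an asymptotic pseudo-trajectory of the averaged (deterministic) flow analysed in Theorems \ref{Theorem1} and \ref{Theorem3}. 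I would make this precise using the Robbins--Monro decomposition set up before Theorem \ref{Theorem1}: the correlated noise $\eta_j=r_j(\mathbf{S},\mathbf{a})-\mathbb{E}_{\mathbf{S}}r_j(\mathbf{S},\mathbf{a})$ is a martingale difference for the filtration $\mathcal{F}_k$, and under A\ref{Assumption1} the condition $\sum_k\lambda_k^2<\infty$ forces the cumulative term $\xi_t=\sum_m\lambda_m M_{m+1}$ to converge almost surely, its fluctuations being precisely what is controlled by $\sup_{k'\ge0}\|\delta_{t,t+k'}\|$ inside $y_{2,t}$.

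With the averaging in hand, the two bounds follow from results already established. By Theorem \ref{Theorem3}, the deterministic ODE satisfies $\|\mathbf{a}_t-\mathbf{a}^*\|\le\acute{M}e^{-\acute{m}t}\Delta_0+O(\epsilon+\max_jb_j^3)=y_{1,t}$; since the stochastic drift agrees with the deterministic one almost surely by the preceding step, the same exponential contraction toward $\mathbf{a}^*$ governs (\ref{eq_Stochastic_Main}) on almost every path. By Theorem \ref{Theorem1} and the interpolation estimate accompanying it, the interpolated algorithm tracks the ODE with error at most $C_T(\lambda_{t+k}+L\sum_{k'\ge0}\lambda_{t+k'}^2)+\sup_{k'\ge0}\|\delta_{t,t+k'}\|=y_{2,t}$. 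Concatenating the two through the triangle inequality above, and noting that both estimates hold on a common probability-one event (the intersection of the ergodic-averaging event and the martingale-convergence event), yields $P(\|\tilde{\mathbf{a}}_t-\mathbf{a}^*\|\le y_{1,t}+y_{2,t})=1$, which is the claim.

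The main obstacle is the averaging step: unlike the i.i.d.\ settings of the earlier literature, the noise here is Markovian (state-dependent and time-correlated), so I cannot invoke a plain strong law of large numbers. I must verify that the ergodic limit holds uniformly enough along the slowly varying action path $\mathbf{a}_t$ --- that is, that the separation of timescales between the fast, ergodic state $\mathbf{S}_t$ and the slowly drifting $\hat{\mathbf{a}}_t$ is legitimate --- and that the almost-sure averaging and the almost-sure martingale convergence can be realised simultaneously on a single full-measure event. Once this uniform ergodic averaging is justified, the remainder is a routine concatenation of Theorems \ref{Theorem1} and \ref{Theorem3} exactly as in Corollary \ref{Corollary2}.
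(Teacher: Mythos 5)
Your proposal follows essentially the same route as the paper: the paper's entire proof is the one-line observation that, since the state process is ergodic, the stochastic averaging theorem of \cite{stochasticKrstic} identifies the asymptotic drift of the stochastic ODE (\ref{eq_Stochastic_Main}) with that of the deterministic ODE, after which the bound $y_{1,t}+y_{2,t}$ is exactly your triangle-inequality concatenation of Theorems \ref{Theorem1} and \ref{Theorem3} as in Corollary \ref{Corollary2}. If anything, your write-up is more careful than the paper's: you attach $y_{2,t}$ to the algorithm--ODE gap and $y_{1,t}$ to the ODE--equilibrium gap, consistently with the definitions in (\ref{ineqkrstic2}) and (\ref{ineqBorkar}) (the proof of Corollary \ref{Corollary2} transposes these labels), and you explicitly flag the uniform-averaging issue for Markovian, state-dependent noise that the paper delegates entirely to the citation.
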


Since the state process is ergodic, we can apply the stochastic averaging theorem from \cite{stochasticKrstic} to get the announced result.
\section{Numerical Example: A Generic Wireless Network with Interference\label{NumericalExample}}

Even though the distributed optimization problem, considered in this paper, and the developed approach are general and can be used in many application domains. As an application of the above framework, we will consider the problem of  power control in wireless networks in order to better illustrate our contribution.  Consider an interference channel composed of  $N$ transmit receiver pairs as shown in Figure \ref{Dynamic_Enviornment}. Each transmitter communicates with its corresponding receiver and incurs an interference on the other receivers. Each receiver feeds back a numerical value of the payoff $\tilde{\gamma}_{j} (\mathbf{H},\mathbf{p})$ to its corresponding transmitter.

The problem is composed of transmitter-receiver pairs; all of them use the same frequency and thus generate interference onto each other. Each transmitter-receiver pair  has therefore its own payoff/reward/utility function that depends necessarily on the interference exerted by the other pairs/nodes.  Since the wireless channel is time varying as well as the interference, the objective is necessarily to optimize in the long-run (e.g. average) the payoff functions of all the nodes. The payoff function of node $j$ at time $k$ is denoted by  $r_{j}(\mathbf{H}_k,\mathbf{p}_k)$ where $\mathbf{H}_k:=[h_{k}(i,j)]$ represents an $N\times N$ matrix containing  channel coefficients at time $k$, $h_{k}(i,j)$ represents the channel coefficient between transmitter $i$ and receiver $j$ (where $(i,j)\in\mathcal{N}^2$) and $\mathbf{p}_k$ represents the vector containing transmit powers of $N$ transmit-receive nodes. The most common technique used to obtain a local maximum of the nodes' payoff functions is the gradient based descent or ascent method.

\begin{rem}\label{remark3}
    \begin{table}[htb]
\caption{Equivalent Notations for Wireless} \label{table_of_notations2}
\begin{center}
\begin{tabular}{c|c|c}
\hline
  General &  Application & Description \\ \hline
    $\tilde{r}_{j,k}$ &$ \tilde{\gamma}_{j,k}$ & utility/payoff of transmitter $j$ at time $k$\\
    $a_{j,k}$ & $p_{j,k}$ & action/power of transmitter $j$ at time $k$\\
 $s_{jj',k}$ & $g_{jj',k}$ & state/channel gain between transmitter \\
 && $j$  and  receiver $j'$ at time $k$\\
\hline
\end{tabular}
\end{center}
\end{table}
\end{rem}

    \begin{figure*}[!hbtp]
        \begin{center}
            \tikzstyle{block_pu} = [draw,dashed, draw=black!90,thick,rectangle, minimum height=6em, minimum width=18em,roundedcorners=.5em]

\tikzstyle{block_su} = [draw,dashed, draw=black!90!black,thick,rectangle, minimum height=6em, minimum width=18em,roundedcorners=.5em]

\tikzstyle{antenna} = [draw, draw=black!50,thick,fill=black!20, isosceles triangle,rotate=-90,scale=.4 ]

\tikzstyle{sum2}=[ node distance=2cm]

\tikzstyle{sum}=[draw,draw=black!90,thick,double, fill=black!10, circle , node distance=2cm]


\begin{tikzpicture}

\foreach \r/\s/\a  in {1/1/1,2/j/0,3/N/1}
{
    \ifnum \a=0
        \node [sum2] (r\r) at (6.5em,-6*\r em) {$\vdots$};
    \else
        \node [sum] (r\r) at (6.5em,-6*\r em) {$Rx_\s$};
    \fi
    \foreach \t/\u/\b in {1/1/1,2/j/0,3/N/1}
    {
        \ifnum \b=0
            \node [sum2] (t\t) at (-6.5em,-6*\t em) {$\vdots$};
        \else
            \node [sum] (t\t) at (-6.5em,-6*\t em) {$Tx_\u$};
        \fi

    \ifnum \r=\t  

       \ifnum \r=0
        \else
             \ifnum \r=4

                \draw [black!90, -stealth ,shorten >=.25em,thick,bend right=45,looseness=1] (t\t.east) -- node [above] {$g_{\u\s}$} (r\r.west) ;
                \draw [dashed, color=black!50!black, -stealth,shorten >=.25em,thick, bend right, looseness=1] (r\r.north) to node [below] {$\tilde{\gamma}_\s$} (t\t.north);
            \else
                \draw [black!90, -stealth ,shorten >=.25em,thick,bend right=45,looseness=1] (t\t.east) -- node [above] {$g_{\u\s}$} (r\r.west) ;
                \draw [dashed, color=black!50!black, -stealth,shorten >=.25em,thick, bend right, looseness=1] (r\r.north) to node [below] {$\tilde{\gamma}_\s$} (t\t.north);
            \fi
        \fi
    \else
        \ifnum \r=3
            \draw [black!50, -stealth ,shorten >=.25em,thick,bend right=45,looseness=1.2] (t\t.east) -- node [near end,sloped,below] {$g_{\u\s}$} (r\r.west);
        \else
            \ifnum \r=4
                \draw [black!50, -stealth ,shorten >=.25em,thick,bend right=45,looseness=1.2] (t\t.east) -- node [near end,sloped,below] {$g_{\u\s}$} (r\r.west);
            \else
                \draw [black!50, -stealth ,shorten >=.25em,thick,bend right=45,looseness=1.2] (t\t.east) -- node [near end,sloped,below] {$g_{\u\s}$} (r\r.west);
            \fi
         \fi
    \fi
    }
}
%
\end{tikzpicture}
            \caption[Interference Channel Model]{\label{Dynamic_Enviornment}Interference Channel Model}
        \end{center}
    \end{figure*}
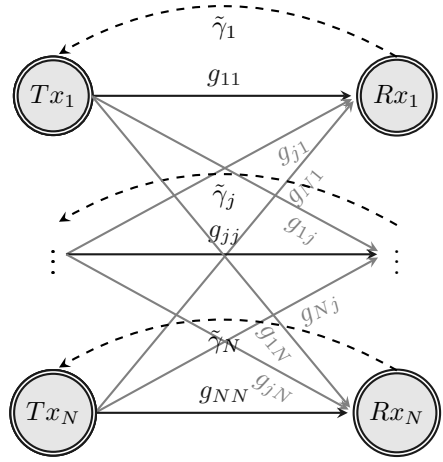

In section \ref{Mainresults}, we proved that our proposed algorithm converges to  $p^*$ for any type of payoff functions which satisfies the assumptions in section \ref{Convergence_to_ODE}. In order to show numerically that our algorithm converges to $p^*$, we run our algorithm for a simple payoff function. In parallel, we obtain analytically the Nash equilibrium $p^*$ and compare the convergence point of our algorithm to $p^*$. We therefore choose a simple payoff function for which $p^*$ can be obtained analytically.

The payoff function of node $j$ at time $k$ has then the following form:

\begin{eqnarray}\nonumber
\highlightedBox{\tilde{\gamma}_{j}(\mathbf{H}_k,\mathbf{p}_k)=\underbrace{\omega}_{\text{bandwidth}}\underbrace{\log(1+\frac{p_{j,k}g_{jj,k}}{\sigma^2+\sum_{j'\neq j} p_{j',k}g_{j'j,k}})}_{\text{Rate}}-\underbrace{\kappa p_{j,k}}_{\text{constraint on powers}}
}
\end{eqnarray}
where $\omega$ represents the bandwidth available for transmission. The above payoff function $\tilde{\gamma}_{j}(\mathbf{H}_k,\mathbf{p}_k)$ consists of $log$ of $(1+SINR)$ of  user $j$ and the unit cost of transmission is $\kappa$. It is assumed that a used doesn't know the structure function $\tilde{\gamma}_{j}(.)$ or the law of the channel state.
For the above payoff function to ensure the assumption A3-A4 and Remark \ref{remark4},\ref{remark5} we need to satisfy the condition { $\mathbb{E}|h_{jj}|^2\geq \mathbb{E}\sum_{j'\neq j} |h_{j'j}|^2$.} Please see appendix for more details.

The problem here is to maximize the payoff function $\tilde{\gamma}_j(\mathbf{H},\mathbf{p})$ which is stated as follows: find $\mathbf{p}^*$ such that for each user $j\in\mathcal{N},$ satisfies \\
$
p_j^*\in \arg\max_{p_j\geq 0}\mathbb{E}\tilde{\gamma}_{j}(\mathbf{H},p_1^*,\ldots, p_{j-1}^*,p_j,p_{j+1}^*,\ldots,p^*_N).
$
Note that when $g_{jj}=0$ then the payoff of user $j$ is negative and  the minimum power ${p}^*_j=0$ is a solution to the above problem. For the remaining, we assume that $|h_{jj}|^2=g_{jj}> 0.$

The  channel $h_{j,j'}$ is time varying and is generated using an independent and identically distributed complex gaussian channel model with variance $\sigma_{jj'}^2$ such that $\sigma_{jj}= 1$  $\sigma_{jj'}= 0.1, j'\neq j                                                                                           $.  The thermal noise is assumed to be a zero mean gaussian with variance $\sigma^2$ such that $\sigma^2=1.$

We consider the following simulation settings with $N=2$ for the above wireless model:
$k_1=0.9,k_2=0.9,\phi_1=0,\phi_2=0$,$\Omega_1=0.9,\Omega_2=1,$ $b_1=0.9,b_2=0.9.$ The numerical setting could be tuned in order to make the convergence slower or faster with some other tradeoff. Due to space limitations further discussion on how to select these parameters has been omitted. $p_{1,0}$ and $p_{2,0}$ represent the starting points of the algorithm which are initialized as $p_{1,0}=p^*_{1}+10$ and $p_{2,0}=p^*_{2}+10$. $\kappa=2$ is the penalty for interference, $\omega=10$ is the bandwidth and the variance of noise is normalized.
Figure \ref{power1yt04} represents the average transmit power trajectories of the algorithm for two nodes. The dotted line represents $p^*$.
As can be seen from the plots that the system converges to $p^*$ where $p_j^*= 3.9604, \;j\in \{1,2\}$.

\begin{figure*}[htb]
  \begin{center}
    \includegraphics[width=12cm]{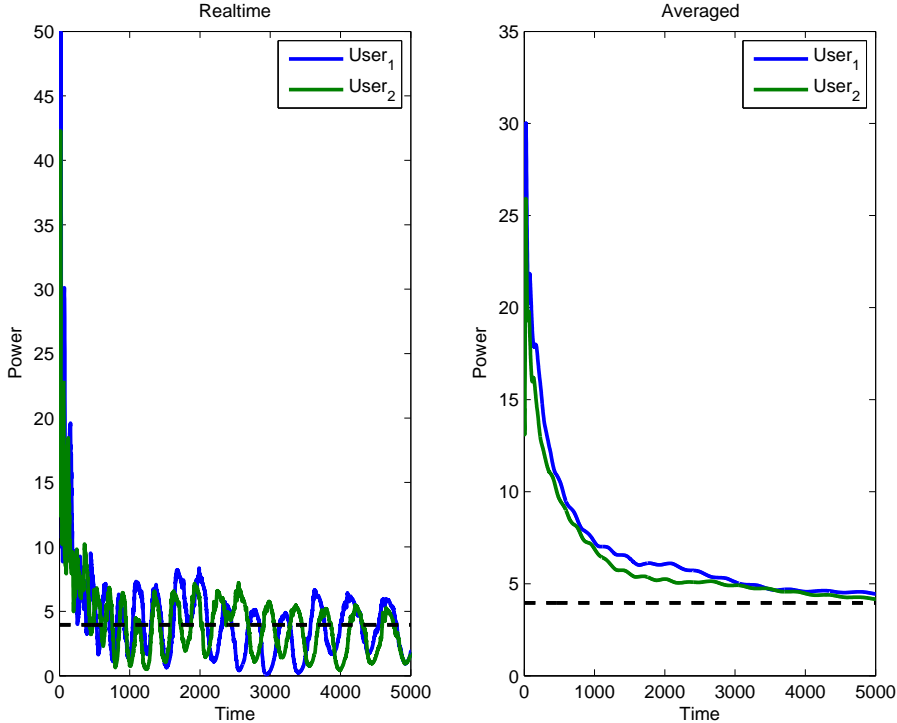}\\
  \caption{Power  evolution (discrete time)}\label{power1yt04}
    \end{center}
\end{figure*}

\begin{figure*}[htb]
  \begin{center}
    \includegraphics[width=12cm]{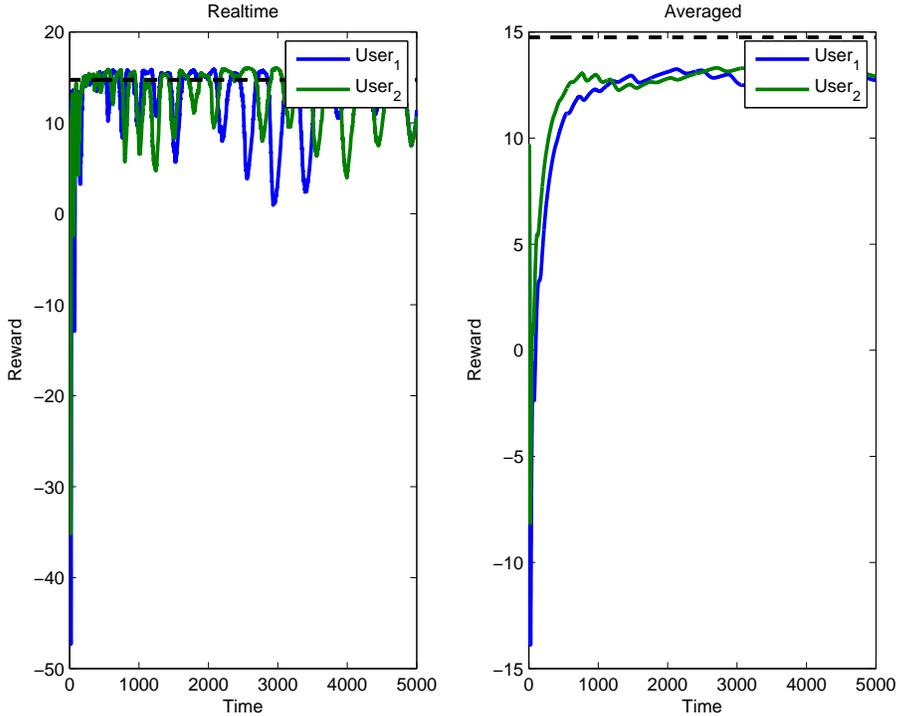}\\
  \caption{Payoff  evolution (discrete time)}\label{power1yt05}
    \end{center}
\end{figure*}


The example we discussed is only one of the possible types of applications where our proposed algorithm can be implemented.

Consider for example the following payoffs:  $q_1(.)=goodput(.)$ and $q_2(.)=\mathbb{P}(goodput(.)<\eta)$ where $\eta$  is a small value and $\mathbb{P}(.)$ stands for probability.
Goodput represents the ratio of correctly received information bits vs the number of transmitter bits. In wireless communications the  channel is constantly changing due to various physical phenomenon and interference from other sources and changes in the environment. It is hard to have a closed form expression for $q_1(.)$ due to complexity of the transmitter, receiver and unknown parameters. In practice, at each time $k$, the receiver has therefore a numerical value of $goodput(.)$ but no closed form expression for rate/goodput is available especially for advanced coding scheme (e.g. turbo code, etc.). $q_2(.)$ represents an outage probability for which also depends on the goodput, the gradient for $q_2(.)$ is notoriously hard to compute without channel and interference statistics knowledge (probability distribution function)  and closed form expression of $goodput(.)$. Our scheme can be particularly helpful in such scenarios.

The price/design parameter $\kappa$ inside the reward function can be tuned such that the solution of the distributed robust extremum coincides with a global optimizer of the system designer. The $\kappa$ can be same for all nodes or each node can have its own $\kappa_j$. Let $\mathbf{a}_g^*$ represent the optimal action or set of actions to be performed by each node to maximize their respective utilities.  It is possible to set $\kappa$ such that the following equation is satisfied.$\mathbf{a}(\kappa)=\mathbf{a}_g^*.$
$\kappa$ could represent a scalar or a vector depending on the system size and the application. To be able to effectively make  $\mathbf{a}(\kappa)$ equal to $\mathbf{a}^*$ we need to have enough degrees of freedom in the system. However this type of tuning is not true in general.

\section{Concluding remarks}\label{Conclusion}
\begin{description}
    \item[Work Presented:] In this paper we have presented a Nash seeking algorithm which is able to find the local minima using just the numerical value of the stochastic state dependent payoff function at each discrete time sample. We proved the convergence of our algorithm to a limiting ODE. We have provided as well the error bound for the algorithm and the convergence time to be in a close neighborhood of the Nash equilibrium.  A numerical example for a generic wireless network is provided for illustration. The convergence bounds achieved by our method are dependent on the step size and the perturbation amplitude.
  \item[New Class of Functions:] In this work we introduced a new class of state dependent payoff functions  $r_j(\mathbf{S},\mathbf{a})$ which are inspired from wireless systems applications. But these kind of functions are more general and appear in other application areas.
  \item[Achievable Bounds:] As it is clear from results in Theorem \ref{Theorem1} that convergence depends on an exponential term and the amplitude of the sinus perturbation. As amplitude becomes smaller, the error bound also vanishes. In contrast the standard stochastic subgradient method only depend on the step size.
  \item[Global Analysis:] All the work considered in this paper including Krstic et.al. consider local stability. Our work is an extension of their work and works for local stability.
       The future work will focus on the extension to the case of Global Stability of Nash equilibrium for both deterministic and stochastic payoff functions.
  \item[Multidimensional Aspect:] The presented work has been studied for scalar reward and scalar action by each node. Scalar scenario has several applications to wireless (as in the aforementioned example) and sensor networks and numerous examples can be considered.  A possible extension to this work could be in the direction of vector actions where each users is able to perform multiple actions based on multiple rewards.
\end{description}

%
\appendix

\section{Convergence Theorems}\label{appendix_Convergence}

\subsection{Variable Step Size: Proof of Theorem \ref{Theorem1}}
The Theorem \ref{Theorem1} states that
Under Assumption A1, the learning algorithm converges almost surely to the trajectory of a non-autonomous system given by

  \begin{eqnarray*}
        \frac{d}{dt}\hat{{a}}_{j,t}&=& z_j b_j \sin(\Omega_j t+\phi_j)\mathbb{E}_{{\mathbf{S}}}\left(r_j( {\mathbf{S}}, \mathbf{a}_t)\right)\\
        {a}_{j,t}&=&\hat{{a}}_{j,t}+b_j \sin(\Omega_j t+\phi_j)
    \end{eqnarray*}

The proof follows in several steps.
\begin{itemize}
  \item The first step provides conditions for  Lipschitz continuity of the expected payoff which is given in Lemma \ref{lemma1}.
  From Lemma \ref{lemma2} we have that  $\forall j,t$, $f_j(t,\mathbf{a}) \triangleq b_j z_j\sin(\Omega_j t+\phi_j)\mathbb{E}_\mathbf{S}r_{j}(\mathbf{S},\mathbf{a}),$ is Lipschitz  over the domain $\mathcal{D}$
  \item Second step: the learning rates are chosen such that they satisfy assumption $A1.$
  \item Third step: we check the noise conditions.
\end{itemize}

\begin{lem} \label{lemma1}
Let
  \begin{eqnarray*}
    &&(\mathbf{S},\mathbf{a})\longmapsto r_j(\mathbf{S},\mathbf{a})
    \forall \mathbf{S} \in \mathcal{S}, \exists\; L_{j,\mathbf{S}}\ \mbox{such that} \\
    (C_1)& : &\|r_j(\mathbf{S},\mathbf{a})-r_j(\mathbf{S},\mathbf{a}')\|\leq L_{j,\mathbf{S}}\|\mathbf{a}-\mathbf{a}'\| \;\forall (\mathbf{a},\mathbf{a}')\in \mathcal{A}\\
    (C_2)& : &\mathbb{E}_\mathbf{S} L_{j,\mathbf{S}} <+\infty
  \end{eqnarray*}
    then the mapping $\mathbf{\mathbf{a}}\longmapsto \mathbb{E}_\mathbf{S} r_j(\mathbf{S},\mathbf{a})$ is Lipschitz  with Lipschitz  constant $L_j= \mathbb{E}_\mathbf{S}L_{j,\mathbf{S}}$
\end{lem}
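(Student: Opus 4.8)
The plan is to reduce the claim to the elementary fact that taking expectation commutes with subtraction and cannot increase a norm, so that the pointwise Lipschitz bound $(C_1)$ transfers directly to the averaged map once we integrate against the law of $\mathbf{S}$. Concretely, I would fix two actions $\mathbf{a},\mathbf{a}'\in\mathcal{A}$ and start from the difference of the expected payoffs, rewriting it using linearity of the expectation operator as
\begin{eqnarray*}
\left\|\mathbb{E}_{\mathbf{S}}r_j(\mathbf{S},\mathbf{a})-\mathbb{E}_{\mathbf{S}}r_j(\mathbf{S},\mathbf{a}')\right\|
=\left\|\mathbb{E}_{\mathbf{S}}\big[r_j(\mathbf{S},\mathbf{a})-r_j(\mathbf{S},\mathbf{a}')\big]\right\|.
\end{eqnarray*}

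Next I would pull the norm inside the expectation. Since the norm is a convex function, this is Jensen's inequality (equivalently, the generalized triangle inequality $\|\mathbb{E}X\|\le\mathbb{E}\|X\|$ valid in finite-dimensional normed spaces), giving
\begin{eqnarray*}
\left\|\mathbb{E}_{\mathbf{S}}\big[r_j(\mathbf{S},\mathbf{a})-r_j(\mathbf{S},\mathbf{a}')\big]\right\|
\le \mathbb{E}_{\mathbf{S}}\left\|r_j(\mathbf{S},\mathbf{a})-r_j(\mathbf{S},\mathbf{a}')\right\|.
\end{eqnarray*}
Applying the pointwise hypothesis $(C_1)$ under the expectation, and then factoring out the deterministic quantity $\|\mathbf{a}-\mathbf{a}'\|$ (which does not depend on $\mathbf{S}$), I obtain
\begin{eqnarray*}
\mathbb{E}_{\mathbf{S}}\left\|r_j(\mathbf{S},\mathbf{a})-r_j(\mathbf{S},\mathbf{a}')\right\|
\le \mathbb{E}_{\mathbf{S}}\big[L_{j,\mathbf{S}}\,\|\mathbf{a}-\mathbf{a}'\|\big]
=\big(\mathbb{E}_{\mathbf{S}}L_{j,\mathbf{S}}\big)\,\|\mathbf{a}-\mathbf{a}'\|,
\end{eqnarray*}
which is exactly $L_j\|\mathbf{a}-\mathbf{a}'\|$ with $L_j=\mathbb{E}_{\mathbf{S}}L_{j,\mathbf{S}}$. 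Assumption $(C_2)$ guarantees $L_j<+\infty$, so the constant is genuine and the map is Lipschitz, completing the chain of inequalities.

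There is no serious obstacle here; the argument is essentially a one-line application of monotonicity and convexity of the expectation. The only point requiring a word of care is the well-definedness of the expected payoff itself, i.e.\ that $\mathbb{E}_{\mathbf{S}}r_j(\mathbf{S},\mathbf{a})$ is finite for each fixed $\mathbf{a}$; this is already secured by the standing integrability assumption on $\mathbf{S}\longmapsto r_j(\mathbf{S},\mathbf{a})$ stated before the lemma, and it justifies interchanging the norm with the (finite) expectation. I would also remark that $(C_2)$ is what upgrades a merely finite-valued bound to a uniform Lipschitz constant independent of $\mathbf{a},\mathbf{a}'$, which is precisely the property needed downstream to conclude, via Lemma \ref{lemma2}, that $f_j(t,\mathbf{a})$ is Lipschitz over the domain $\mathcal{D}$.
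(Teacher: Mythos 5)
Your proof is correct and follows essentially the same route as the paper's: Jensen's inequality to pass the norm inside the expectation, then the pointwise bound $(C_1)$ and condition $(C_2)$ to extract the finite constant $L_j=\mathbb{E}_{\mathbf{S}}L_{j,\mathbf{S}}$. If anything, your version is stated more carefully than the paper's (whose opening sentence garbles the hypothesis), so no changes are needed.
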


\begin{proof} [Proof of Lemma \ref{Theorem1}]\label{prooflemma1}
Suppose that $\mathbf{a}\longmapsto E_\mathbf{S} r_j(\mathbf{S},\mathbf{a})$ is Lipschitz  with Lipschitz  constant $L_{j,\mathbf{S}}$, then by Jensen's inequality one has
  \begin{eqnarray*}
 \|\mathbb{E}_\mathbf{S} r_j(\mathbf{S},\mathbf{a})-\mathbb{E}_\mathbf{S} r_j(\mathbf{S},\mathbf{a}')\|& \leq &\mathbb{E}_\mathbf{S}\|r_j(\mathbf{S},\mathbf{a})-r_j(\mathbf{S},\mathbf{a}')\|
  \end{eqnarray*}

By condition $C_2,$ $\mathbb{E}_\mathbf{S} L_{j,\mathbf{S}} <+\infty.$ Let $L_j$ be  $\mathbb{E}_\mathbf{S}L_{j,\mathbf{S}}.$ Then

  \begin{eqnarray*}
 \|\mathbb{E}_\mathbf{S} r_j(\mathbf{S},\mathbf{a})-\mathbb{E}_\mathbf{S} r_j(\mathbf{S},\mathbf{a}')\|
                                                & \leq & L_{j} \|\mathbf{a}-\mathbf{a}'\|
  \end{eqnarray*}

  This completes the proof.

\end{proof}
\begin{rem}\label{remark4}

\begin{itemize}\item Note that under $C_1$ and $C_2$ the expected payoff vector $r=(r_j)_{j\in\mathcal{N}}$ is Lipschitz  continuous with $\tilde{L}=\max_{j} L_j, $

\item
If $\mathcal{S}$ is a compact set and $\mathbf{S}\longmapsto L_{j,\mathbf{S}}$ is continuous then $\mathbf{a}\longmapsto \mathbb{E}_\mathbf{S} r_j(\mathbf{S},\mathbf{a})$ is Lipschitz  [In particular, the condition $C_2$ is not needed]
\end{itemize}
\end{rem}
We shall prove the above remark by \textit{Reductio ad absurdum}. To prove the second statement of Remark \ref{remark4} we use compactness and continuity argument. We start from Bolzano--Wierstrass theorem which states that. For any $k,$ any continuous map $x\longmapsto f(k,\mathbf{a})$ over a compact set  $\mathcal{D}$ has at least one maximum, i.e.,
$\sup f(k,\mathbf{a})=\max_{\mathbf{a}\in \mathcal{D}} f(k,\mathbf{a})<\infty.$ The proof of this statement can be easily done by contradiction.  Suppose  $\sup f(k,\mathbf{a})=\infty.$ Then there exists a sequence $(a_l)_l$ such that  $ \mathbf{a}_l\in\mathcal{D}$ but $f(k,\mathbf{a}_l)\longrightarrow \infty$ as $l$ goes to infinity. This is impossible because $\mathcal{D}$ is compact which implies that $f(k,\mathcal{D})=\{ f(k,\mathbf{a})\ | \mathbf{a}\in\mathcal{D}\}$ is bounded by continuity.

Since $\mathcal{S}$ is compact and $\mathbf{S}\longmapsto L_\mathbf{S}$ is continuous, $ \sup_{\mathbf{S}\in\mathcal{S}} L_\mathbf{S}$ is also finite.

\begin{rem}\label{remark5}
If $r_j(\mathbf{S},\mathbf{a})$ is continuously differentiable with the respect to $\mathbf{a}$ then it is sufficient to check the expectation of the gradient is bounded (in norm).

if $\mathcal{S}$ is in Euclidean Space
\begin{itemize}
  \item $r_j$ is differentiable w.r.t $\mathbf{a}$
  \item $r_j(\mathbf{S},\mathbf{a})$, $\nabla_\mathbf{a}r_j(\mathbf{S},\mathbf{a})$ are continuous in $\mathbf{S}$
  \item $r_j(\mathbf{S},\mathbf{a})$, $\nabla_\mathbf{a}r_j(\mathbf{S},\mathbf{a})$ are absolutely integrable in $\mathbf{S}$ and $\mathbb{E}_\mathbf{S}r_j(\mathbf{S},\mathbf{a})$ is continuous in $\mathbf{a}.$
\end{itemize}
then
$$
\mathbb{E}[\nabla_\mathbf{a}r_j(\mathbf{S},\mathbf{a})]=\nabla_\mathbf{a}\mathbb{E}[r_j(\mathbf{S},\mathbf{a})]
$$
which can be written as
$$
\int_\mathbf{S}\nabla_\mathbf{a}r_j(\mathbf{S},\mathbf{a})\gamma(d\mathbf{S})=
\nabla_\mathbf{a}\int_\mathbf{S}r_j(\mathbf{S},\mathbf{a})\gamma(d\mathbf{S})
$$
where $\gamma$ is the measure of $\mathbf{S}$ state space. For more details on the above conditions please refer to  \cite{LaurentSchwartz}.

Since $f_j$ is a function of time and the actions of nodes, we need a uniform Lipschitz  condition on $f_j.$

We have
$$
|f_j(t,\mathbf{a})-f_j(t,\mathbf{a}')|\leq b_jz_j|\sin(\Omega_j t+\phi )|\left[  \|\mathbb{E}_\mathbf{S} r_j(\mathbf{S},\mathbf{a})-\mathbb{E}_\mathbf{S} r_j(\mathbf{S},\mathbf{a}')\| \right]
$$
 But one has $|\sin(.)|\leq 1.$ Hence,
 $$
|f_j(t,\mathbf{a})-f_j(t,\mathbf{a}')|\leq b_jz_j\left[  \|\mathbb{E}_\mathbf{S} r_j(\mathbf{S},\mathbf{a})-\mathbb{E}_\mathbf{S} r_j(\mathbf{S},\mathbf{a}')\| \right]
$$
  We use Lemma \ref{lemma1},

   $$
|f_j(t,\mathbf{a})-f_j(t,\mathbf{a}')|\leq  b_jz_j L_{j} \|\mathbf{a}-\mathbf{a}'\|
$$
This implies that the Lipschitz  constant of $f_j$ is less than the one of $r_j$ times the factor $b_jz_j.$

\end{rem}

Finally, we check  the noise conditions. The recursion equation is given by

$$
    {a}_{j,k+1}={a}_{j,k}+\lambda_k[f_{j}(k,{a}_{k})+M_{j,k+1}]
$$

where $M_{j,k+1}$ is a martingale difference sequence.  By definition the martingale sequence for the  algorithm is given as

$ M_{j,k+1}\triangleq z_j b_j \sin(\Omega_j\hat{k}+\phi_j)\left[ \tilde{r}_{j,k+1}-\mathbb{E}_{{\mathbf{S}}}[\tilde{r}_{j,k+1}( {\mathbf{S}}, \mathbf{a}_{k+1})]\right]$

which satisfied the condition $\mathbb{E}[M_{k+1}|\mathcal{F}_k]=0$  for $k\geq0$ almost surely (a.s.)

\begin{lem} \label{lemma2}
If $a_k\in\mathcal{D}$ then  the martingale is square-integrable with

   \begin{eqnarray*}
            \mathbb{E}[\|M_{k+1}\|^2|\mathcal{F}_k]
                                    &\leq& \acute{c}(1+\|\mathbf{a}_k\|^2) \;\forall k
    \end{eqnarray*}

\end{lem}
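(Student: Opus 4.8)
The plan is to bound $\|M_{k+1}\|^2$ componentwise and reduce everything to a second-moment estimate on the realized payoff. Writing $\|M_{k+1}\|^2 = \sum_j M_{j,k+1}^2$ and using $|\sin(\Omega_j\hat{k}+\phi_j)| \leq 1$, I would first obtain the pointwise bound
$$M_{j,k+1}^2 \leq z_j^2 b_j^2 \left(\tilde{r}_{j,k+1} - \mathbb{E}_{\mathbf{S}}r_j(\mathbf{S},\mathbf{a}_k)\right)^2 \leq 2 z_j^2 b_j^2 \left( \tilde{r}_{j,k+1}^2 + \left(\mathbb{E}_{\mathbf{S}}r_j(\mathbf{S},\mathbf{a}_k)\right)^2 \right),$$
where the last step uses $(x-y)^2 \leq 2x^2 + 2y^2$. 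This isolates the two quantities I actually need to control: the second moment of the realized payoff and the squared expected payoff.

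Next I would take the conditional expectation given $\mathcal{F}_k$. Since $\mathbf{a}_k$ is $\mathcal{F}_k$-measurable and the only fresh randomness in $\tilde{r}_{j,k+1}$ enters through the state, the conditional expectation of the realized payoff squared is exactly $\mathbb{E}_{\mathbf{S}}[r_j(\mathbf{S},\mathbf{a}_k)^2]$ with $\mathbf{a}_k$ frozen. The centering term is already $\mathcal{F}_k$-measurable, and by Jensen's inequality $(\mathbb{E}_{\mathbf{S}}r_j(\mathbf{S},\mathbf{a}_k))^2 \leq \mathbb{E}_{\mathbf{S}}[r_j(\mathbf{S},\mathbf{a}_k)^2]$, so both terms are dominated by $\mathbb{E}_{\mathbf{S}}[r_j(\mathbf{S},\mathbf{a}_k)^2]$. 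It therefore suffices to show that this quantity grows at most linearly in $\|\mathbf{a}_k\|^2$. Starting from the pointwise Lipschitz condition $(C_1)$ of Lemma~\ref{lemma1} applied against a fixed reference action (say $\mathbf{0}$), I get $|r_j(\mathbf{S},\mathbf{a})| \leq |r_j(\mathbf{S},\mathbf{0})| + L_{j,\mathbf{S}}\|\mathbf{a}\|$, hence $r_j(\mathbf{S},\mathbf{a})^2 \leq 2 r_j(\mathbf{S},\mathbf{0})^2 + 2 L_{j,\mathbf{S}}^2 \|\mathbf{a}\|^2$. Taking $\mathbb{E}_{\mathbf{S}}$ yields a bound of the form $A_j + B_j\|\mathbf{a}_k\|^2$, and collecting constants over the finitely many nodes $j$ gives the claimed $\acute{c}(1+\|\mathbf{a}_k\|^2)$.

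The main obstacle will be justifying this last integration step, because it requires a square-integrable version of the growth data, namely $\mathbb{E}_{\mathbf{S}}[r_j(\mathbf{S},\mathbf{0})^2] < \infty$ and $\mathbb{E}_{\mathbf{S}} L_{j,\mathbf{S}}^2 < \infty$, which are slightly stronger than condition $(C_2)$ (that only controls $\mathbb{E}_{\mathbf{S}} L_{j,\mathbf{S}}$). I would discharge this by invoking the compactness of $\mathcal{S}$ together with continuity of $\mathbf{S}\mapsto L_{j,\mathbf{S}}$ and of $r_j$, exactly as in Remark~\ref{remark4}: on a compact state space the suprema $\sup_{\mathbf{S}} L_{j,\mathbf{S}}$ and $\sup_{\mathbf{S}}|r_j(\mathbf{S},\mathbf{0})|$ are finite, so the second moments are automatically finite and the linear-growth bound holds uniformly in $\mathbf{S}$. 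Since $\mathbf{a}_k \in \mathcal{D}$ is moreover bounded, the factor $(1+\|\mathbf{a}_k\|^2)$ is itself bounded and square-integrability of $M_{k+1}$ follows at once; the only delicate point is to keep $\acute{c}$ explicit in terms of $z_j, b_j, L_j$ and the reference moments rather than absorbing the boundedness of $\mathcal{D}$ too early.
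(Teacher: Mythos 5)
Your proposal is correct and follows essentially the same route as the paper's own proof: bound the sinus factor by $|\sin(\cdot)|\leq 1$, control the centered difference by a quadratic inequality, invoke the Lipschitz condition $(C_1)$ against the reference action $\mathbf{0}$ to get the linear-growth estimate $\|r_j(\mathbf{S},\mathbf{a}_k)\|\leq \beta_{1,\mathbf{S}}+L_{j,\mathbf{S}}\|\mathbf{a}_k\|$, and take the expectation over the state to assemble $\acute{c}(1+\|\mathbf{a}_k\|^2)$ (your Jensen step folding both terms into $\mathbb{E}_{\mathbf{S}}[r_j(\mathbf{S},\mathbf{a}_k)^2]$ is a minor streamlining of the paper's sum-then-square computation). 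The one point where you go beyond the paper is the second-moment issue: the paper silently introduces $\grave{L}_j=\mathbb{E}_{\mathbf{S}}L_{j,\mathbf{S}}^2+L_j^2$ and $\beta=\mathbb{E}_{\mathbf{S}}\beta_{1,\mathbf{S}}^2+\beta_2^2$ without observing that finiteness of these exceeds what $(C_2)$ guarantees, whereas you correctly flag this and discharge it via compactness of $\mathcal{S}$ and continuity as in Remark \ref{remark4}, which is a legitimate tightening of the paper's argument rather than a divergence from it.
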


\begin{proof}[Proof of Lemma \ref{lemma2}]

Let $\tilde{r}_{j,k+1}$ be  the realization the payoff at time $k+1.$ The expected value of this random variable can be bounded above the norm of $\mathbf{a}_k.$


  \begin{eqnarray*}
 M_{j,k+1}&=& z_j b_j \sin(\Omega_j\hat{k}+\phi_j)( \tilde{r}_{j,k+1}-\mathbb{E}_{{\mathbf{S}}}[\tilde{r}_{j,k+1}( {\mathbf{S}}, \mathbf{a}_{k+1})])\\
\| M_{j,k+1}\|&\leq& |z_j| |b_j| |(\sin(\Omega_j\hat{k}+\phi_j)|\| \tilde{r}_{j,k+1}-\mathbb{E}_{{\mathbf{S}}}[\tilde{r}_{j,k+1}( {\mathbf{S}}, \mathbf{a}_{k+1})]\|)\\
                &\leq& z_j b_j (\| \tilde{r}_{j,k+1}\|+\|\mathbb{E}_{{\mathbf{S}}}[\tilde{r}_{j,k+1}( {\mathbf{S}}, \mathbf{a}_{k+1})]\|)\\
                &\leq& z_j b_j (\| \tilde{r}_{j,k+1}\|+\mathbb{E}_{{\mathbf{S}}}\|\tilde{r}_{j,k+1}( {\mathbf{S}}, \mathbf{a}_{k+1})\|)\\
                &\leq& z b   (\| \tilde{r}_{j,k+1}\|+\mathbb{E}_{{\mathbf{S}}}\|\tilde{r}_{j,k+1}( {\mathbf{S}}, \mathbf{a}_{k+1})\|)
   \end{eqnarray*}

   Where  $|\sin(.)|\leq 1, \;z\triangleq\max|z_j|, \; b\triangleq\max|b_j|, $
  $\| \tilde{r}_{j,k+1}\|$ is bounded because of the Lipschitz condition as mentioned in $C_1,$ which is shown below.

  \begin{eqnarray} \label{Q1}
  \|r_j(\mathbf{S},\mathbf{a}_k)-r_j(\mathbf{S},0)\|&\leq &L_{j,\mathbf{S}}\|\mathbf{a}_k-\mathbf{0}\| \;\forall (\mathbf{a}_k)\in \mathcal{A}\\\nonumber
\|r_j(\mathbf{S},\mathbf{a}_k)\|&\leq &\|r_j(\mathbf{S},0)\|+L_{j,\mathbf{S}}\|\mathbf{a}_k\| \\\nonumber
                                    &\leq &\beta_{1,\mathbf{S}}+L_{j,\mathbf{S}}\|\mathbf{a}_k\|
\end{eqnarray}
Where $\beta_{1,\mathbf{S}}\triangleq\|r_j(\mathbf{S},0)\|.$
 The above equations \ref{Q1} show that $\|r_j(\mathbf{S},\mathbf{a})\|$ is bounded by $\beta_{1,\mathbf{S}}+L_{j,\mathbf{S}}\|\mathbf{a}\|$. By taking expectation of the above set of inequalities we get.
 \begin{eqnarray}\label{Q2}
\mathbb{E}_\mathbf{S} \|r_j(\mathbf{S},\mathbf{a}_k)\|
&\leq &\mathbb{E}_\mathbf{S}\| r_j(\mathbf{S},0)\|+\mathbb{E}_\mathbf{S} L_{j,\mathbf{S}}\|\mathbf{a}_k\|\\\nonumber
&\leq &{L}_{j}\|\mathbf{a}_k\|+\mathbb{E}_\mathbf{S} \|r_j(\mathbf{S},0)\|\\\nonumber
&\leq &{L}_{j}\|\mathbf{a}_k\|+\beta_2
  \end{eqnarray}
   Where $\beta_2\triangleq\mathbb{E}_\mathbf{S} \|r_j(\mathbf{S},0),$  ${L}_{j}\triangleq\mathbb{E}_\mathbf{S}L_{j,\mathbf{S}}.$ The above set of inequalities  \ref{Q2} show that $\mathbb{E}_\mathbf{S} \|r_j(\mathbf{S},\mathbf{a})\|$ is bounded.

   Combining the results of inequalities in  \ref{Q1}  \ref{Q2} we can get

    \begin{eqnarray*}
 \|M_{j,k+1}\|^2 &\leq& z^2 b^2  (\beta_{1,\mathbf{S}}+L_{j,\mathbf{S}}\|\mathbf{a}_k\|+ {L}_{j}\|\mathbf{a}_k\|+\beta_2)^2\\
    &\leq& 2 z^2 b^2((\beta_{1,\mathbf{S}}+\beta_2)^2+(L_{j,\mathbf{S}}+ {L}_{j})^2\|\mathbf{a}_k\|^2)\\
    &\leq& 4 z^2 b^2(\beta_{1,\mathbf{S}}^2+\beta_2^2+(L_{j,\mathbf{S}}^2+ {L}_{j}^2)\|\mathbf{a}_k\|^2)
   \end{eqnarray*}

Taking $\mathbb{E}_\mathbf{S}$ over the above inequalities we get:
    \begin{eqnarray*}
\mathbb{E}_\mathbf{S} \|M_{j,k+1}\|^2
&\leq& 4 z^2 b^2(\mathbb{E}_\mathbf{S}\beta_{1,\mathbf{S}}^2+\beta_2^2+(\mathbb{E}_\mathbf{S}L_{j,\mathbf{S}}^2+ {L}_{j}^2)\|\mathbf{a}_k\|^2)\\
&\leq&4 z^2 b^2  (\beta+\grave{L}_j\|\mathbf{a}_k\|^2)\\
&\leq& \acute{c} (1+\|\mathbf{a}_k\|^2)\\
   \end{eqnarray*}
Where $\grave{L}_j\triangleq \mathbb{E}_\mathbf{S}L_{j,\mathbf{S}}^2+ {L}_{j}^2$ , $\beta \triangleq \mathbb{E}_\mathbf{S}\beta_{1,\mathbf{S}}^2+\beta_2^2$ and $\acute{c}\geq 4 z^2 b^2 (\beta+ \grave{L}_j) $

This completes the proof.
\end{proof}

We now combine the above three steps to derive almost sure convergence to an ODE.
To do so, we interpolate the stochastic process $\mathbf{a}_k$ (an affine interpolation) in order to get a continuous time process following the lines of Borkar \cite{borkar} Chapter 2 Lemma 1. The gap between the solution of the non-autonomous differential equation given by
$$
\frac{d}{dt}\mathbf{a}_t=f(t,\mathbf{a}_t)
$$
and the interpolated process vanishes almost surely for asymptotic interval of length $T>0.$
\begin{eqnarray*}
    \lim_{t\longrightarrow\infty}\sup_{q\in[t,t+T]}\|\tilde{\mathbf{a}}_q-\mathbf{a}_q^*\|&=&0 \; a.s.
\end{eqnarray*}
In order to calculate the bound we need to define a few terms which are helpful in obtaining a compact form of the bound.
 \begin{eqnarray*}
\sup_{t\in[t_k,t_k+T]}
    \|\tilde{\mathbf{a}}(t)-\mathbf{a}^{t_k}(t)\|&\leq& K_{T,t}e^{LT}+C_T\lambda_{t+\acute{k}}\\
                            &=& C_T (\lambda_{t+\acute{k}}+L\sum_{\acute{k}\geq0}\lambda_{t+\acute{k}}^2 )\\
                            &&+\sup_{\acute{k}\geq 0}\|\delta_{t,t+\acute{k}}\|
  \end{eqnarray*}
where
  \begin{eqnarray*}
  K_{T,t} & \triangleq & C_T L \sum_{\acute{k}\geq0}\lambda_{t+\acute{k}}^2+\sup_{\acute{k}\geq0}\|\delta_{t,t+\acute{k}}\|\\
  \delta_{t,t+\acute{k}}&\triangleq&\xi_{t+\acute{k}}-\xi_{t}\\
  \xi_{t}&\triangleq&\sum_{m=0}^{t-1}\lambda_m M_{m+1}\\
  C_T&\triangleq&\|r(0)\|+L(C_0+\|r(0)\|T)e^{LT}<\infty\\
    \mathbb{P}\left(\sup_{\acute{k}}  \|\mathbf{a}_{\acute{k}}\| <C_0\right)&=&1 \\
  \end{eqnarray*}

then we conclude  by discrete adaptation of Lemma 1 in Borkar \cite{borkar}.

\subsection{Fixed Step Size: Proof of Theorem \ref{Theorem2}}

Theorem  \ref{Theorem2} states that
    Under Assumption A2, the learning algorithm converges in distribution to the trajectory of a non-autonomous system given by

    \begin{eqnarray*}
           \frac{d}{dt}\hat{{a}}_{j,t}&=& z_j b_j \sin(\Omega_j t+\phi_j)\mathbb{E}_{{\mathbf{S}}}\left(r_j( {\mathbf{S}}, {a}_t)\right)\\
           {a}_{j,t}&=&\hat{{a}}_{j,t}+b_j \sin(\Omega_j t+\phi_j)
  \end{eqnarray*}

\begin{prop} \label{labelProposition1} Let $\tilde{\hat{\mathbf{a}}}_t$ be the interpolated version the trajectory of our algorithm at time $t$ ${\hat{\mathbf{a}}}_t$ is the trajectory of the the ODE at time $t$. Under assumption A2 $\tilde{\hat{\mathbf{a}}}_t$ converges to $\hat{\mathbf{a}}_t$ as step size vanishes.

    $$
     \mathbb{E}\sup_{t\in[0,T]}[\|\tilde{\hat{\mathbf{a}}}_t-\hat{\mathbf{a}}_t\|^2]^{\frac{1}{2}}=\tilde{C}_T\sqrt{\lambda}
    $$
    \end{prop}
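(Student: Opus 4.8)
The plan is to control the error pathwise by comparing the integral representations of the two trajectories on the constant grid $t_n=n\lambda$, and then to extract the $\sqrt{\lambda}$ rate from the martingale part via Doob's inequality. First I would write the iterate in telescoped form, using the Robbins--Monro decomposition already established,
\begin{equation*}
\hat{\mathbf{a}}_n = \hat{\mathbf{a}}_0 + \lambda\sum_{m=0}^{n-1} f(t_m,\mathbf{a}_m) + \lambda\sum_{m=0}^{n-1} M_{m+1},
\end{equation*}
while the ODE solution satisfies $\hat{\mathbf{a}}_t = \hat{\mathbf{a}}_0 + \int_0^t f(s,\mathbf{a}_s)\,ds$. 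Since $\mathbf{a}_k$ is an affine (hence $1$-Lipschitz) image of $\hat{\mathbf{a}}_k$ through the perturbation $b_j\sin(\Omega_j\hat{k}+\phi_j)$, the drift $f$ inherits the Lipschitz constant $L$ from Lemma \ref{lemma1} and Remark \ref{remark5}, so I may treat $f(t,\cdot)$ as $L$-Lipschitz throughout.

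Second, I would split the difference $\tilde{\hat{\mathbf{a}}}_t-\hat{\mathbf{a}}_t$ into three pieces: a drift-mismatch term $\int_0^t[f(\cdot,\tilde{\hat{\mathbf{a}}})-f(\cdot,\hat{\mathbf{a}})]$, a time-discretization term coming from freezing the integrand at the left grid point over each interval of length $\lambda$, and the interpolated martingale $\tilde{M}_t=\lambda\sum_m M_{m+1}$. The discretization term is $O(\lambda)$ because each increment of the interpolated process is $O(\lambda)$ and $f$ is Lipschitz and bounded on the second-moment-bounded domain, so it is subdominant. The crucial estimate is on $\tilde{M}_t$: by Doob's maximal inequality and the orthogonality of the martingale increments,
\begin{equation*}
\mathbb{E}\sup_{t\in[0,T]}\|\tilde{M}_t\|^2 \leq 4\,\lambda^2\sum_{m=0}^{\lfloor T/\lambda\rfloor} \mathbb{E}\|M_{m+1}\|^2 .
\end{equation*}
By Lemma \ref{lemma2}, $\mathbb{E}\|M_{m+1}\|^2\leq \acute{c}(1+\mathbb{E}\|\mathbf{a}_m\|^2)$, and Assumption A2 makes $\sup_m\mathbb{E}\|\mathbf{a}_m\|^2$ finite; since there are $\lfloor T/\lambda\rfloor$ terms, the bound is $\leq 4\acute{c}(1+\sup_m\mathbb{E}\|\mathbf{a}_m\|^2)\,T\,\lambda$, i.e. of order $\lambda$ in mean square and therefore of order $\sqrt{\lambda}$ in the $L^2$-norm. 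This is precisely where the factor $\sqrt{\lambda}$ originates.

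Finally, I would close the estimate with Gronwall's inequality. Writing $e(t)=\sup_{s\le t}\|\tilde{\hat{\mathbf{a}}}_s-\hat{\mathbf{a}}_s\|$, the decomposition yields $e(t)\le L\int_0^t e(s)\,ds+\sup_{s\le T}\|\tilde{M}_s\|+c\lambda$, hence $e(T)\le(\sup_{s\le T}\|\tilde{M}_s\|+c\lambda)\,e^{LT}$. Taking the $L^2$-norm, inserting the martingale bound above, and absorbing $e^{LT}$, $L$, $T$, $\acute{c}$ and $\sup_m\mathbb{E}\|\mathbf{a}_m\|^2$ into a single constant $\tilde{C}_T$ gives $[\mathbb{E}\sup_{t\in[0,T]}\|\tilde{\hat{\mathbf{a}}}_t-\hat{\mathbf{a}}_t\|^2]^{1/2}\le\tilde{C}_T\sqrt{\lambda}$, as claimed. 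The main obstacle is the interplay between the pathwise supremum and the expectation: one must apply Doob \emph{before} Gronwall and keep the uniform-integrability and second-moment control of A2 active, since without a finite $\sup_m\mathbb{E}\|\mathbf{a}_m\|^2$ the martingale variance sum would not collapse to a finite multiple of $T\lambda$, and the $\sqrt{\lambda}$ rate would fail.
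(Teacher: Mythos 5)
Your proposal is correct and follows essentially the same route as the paper: telescoping the iterate against the integral form of the ODE, bounding the interpolated martingale by a maximal inequality (you use Doob's $L^2$ inequality plus orthogonality of increments where the paper invokes Burkholder's inequality, which at exponent $2$ amounts to the same estimate, with Lemma \ref{lemma2} and Assumption A2 supplying the $O(T\lambda)$ variance bound in both cases), and closing with Gronwall to obtain the $\tilde{C}_T\sqrt{\lambda}$ rate. The only cosmetic difference is that you run Gronwall pathwise with the random constant $\sup_{s\le T}\|\tilde{M}_s\|+c\lambda$ and then take $L^2$-norms, whereas the paper applies the discrete Gronwall inequality directly to $\epsilon_k=\mathbb{E}[\sup_{k'\le k}\|\tilde{\hat{\mathbf{a}}}_{T_{t+k'}}-\hat{\mathbf{a}}_{T_{t+k'}}\|^2]^{1/2}$; both orderings are valid since your inhomogeneous term is constant in $t$.
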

    Proposition  \ref{labelProposition1} implies theorem  \ref{Theorem2}.

\begin{proof}[Proof of Proposition  \ref{labelProposition1}]
To prove the above proposition we start with a fixed step size $\lambda>0.$

\begin{itemize}
  \item Time Scale. $T_t=\sum_{k=1}^t\lambda=t\lambda$, for $t\geq0$
  \item The cumulative noise at iteration $t$ $\xi_t=\sum_{k=1}^{t-1}\lambda M_{k+1}=\lambda\sum_{k=1}^{t-1} M_{k+1}$
  \item Define the (affine) interpolated process from $\{\hat{\mathbf{a}}\}_{k\geq0}$ rewritten as
  $$\hat{{a}}_{j,k+1}=\hat{{a}}_{j,k}+\lambda(f_j(k,\hat{{a}}_{k})+M_{k+1}).$$ The advantage of the interpolated process is that it is defined for any continuous time by concatenation.
  The affine interpolation writes $\tilde{\hat{{a}}}_{j,t}=\hat{{a}}_{j,k}+(\frac{t-T_k}{\lambda})(\hat{{a}}_{j,k+1}-\hat{{a}}_{j,k})$ if $t\in[k\lambda,(k+1)\lambda[$ which is now in continuous time.
\end{itemize}

Note that constant learning rate or constant step size $\lambda_t=\lambda$ is suitable for many practical scenarios. It is used for example in numerical analysis:
    Euler-s Scheme (1st Order), Runge Kutta's scheme (4th Order), etc. Our algorithm writes

$$(**)\left\{
\begin{array}{rcl}
    \hat{{a}}_{j,k+1} &=& \hat{{a}}_{j,k}+\lambda(b_jz_j\sin(\hat{k}_t\Omega_j+\phi_j))\tilde{r}_{j,t}\\
    {a}_{j,k+1} &=& \hat{{a}}_{j,k}+(b_j\sin(\hat{k}_t\Omega_j+\phi_j))
\end{array}\right.$$
where $\lambda$ is a constant learning rate, our aim is to analyze $(**)$ asymptotically when $\lambda$ is very small.
In order to prove an asymptotic pseudo-trajectory result for constant learning rate, we need additional assumptions of the sequence generated by the powers. The key additional assumption is the uniform integrability of that process.
We need  the conditions $C_1$ $C_2$, which translate into
    \begin{description}
        \item[-] From Remark \ref{remark5}: gradient of the expectation of payoff is bounded 
        \item[-] From Lemma \ref{lemma2}: Square of the martingale is bounded 
        \item[-] Uniform Integrability of $r_j(\mathbf{S},\mathbf{a})$
    \end{description}
    and $\hat{\mathbf{a}}_{t}$ is the solution of $\dot{\hat{\mathbf{a}}}_{t}=f(t,\hat{\mathbf{a}}_{t})$ starting from $\hat{\mathbf{a}}_{\lfloor\frac{t}{\lambda}\rfloor}$

 \begin{eqnarray*}
    \tilde{\hat{\mathbf{a}}}_{T_{t+m}}&=&\sum_{k=1}^{m}\underbrace{(\tilde{\hat{\mathbf{a}}}_{T_{t+k}}-\tilde{\hat{\mathbf{a}}}_{T_{t+k-1}})}_{\lambda(f_j(\lfloor\frac{T_{t+k-1}}{\lambda}\rfloor,\hat{\mathbf{a}}_{\lfloor\frac{T_{t+k-1}}{\lambda}\rfloor})+M_{k+1})}\\
    &&+\tilde{\hat{\mathbf{a}}}_{\lfloor\frac{T_{t+k-1}}{\lambda}\rfloor}\\
    \tilde{\hat{\mathbf{a}}}_{T_{t+m}}&=&\sum_{k=1}^{m}(T_{t+k}-T_{t+k-1}){ f_j(\lfloor\frac{T_{t+k-1}}{\lambda}\rfloor,\hat{\mathbf{a}}_{\lfloor\frac{T_{t+k-1}}{\lambda}\rfloor})}\\&&
    +\sum_{k=1}^{m}\lambda M_{t+k+1} +\tilde{\hat{\mathbf{a}}}_{T_{t}}\\
    \tilde{\hat{\mathbf{a}}}_{T_{t+m}}&=&\sum_{k=1}^{m}\int_{T_{t+k}}^{T_{t+k-1}}{ f_j(\lfloor\frac{T_{t+k-1}}{\lambda}\rfloor,\hat{\mathbf{a}}_{\lfloor\frac{T_{t+k-1}}{\lambda}\rfloor})}ds\\
    &&+\sum_{k=1}^{m}\lambda M_{t+k+1}+\tilde{\hat{\mathbf{a}}}_{T_{t}}\\
      \end{eqnarray*}

      \begin{eqnarray*}
    \tilde{\hat{\mathbf{a}}}_{T_{t+m}}&=&\sum_{k=1}^{m}\int_{T_{t+k}}^{T_{t+k-1}}{ f_j(\lfloor\frac{T_{t+k-1}}{\lambda}\rfloor,\hat{\mathbf{a}}_{\lfloor\frac{T_{t+k-1}}{\lambda}\rfloor})}ds\\
    &&+ (\xi_{t+m}- \xi_{t})+\tilde{\hat{\mathbf{a}}}_{T_{t}}\\
    \tilde{\hat{\mathbf{a}}}_{T_{t+m}}&=&\sum_{k=1}^{m}\int_{T_{t+k}}^{T_{t+k-1}}{ f_j(\lfloor\frac{s}{\lambda}\rfloor,\tilde{\hat{\mathbf{a}}}_{\lfloor\frac{s}{\lambda}\rfloor})}ds\\
    &&+(\xi_{t+m}- \xi_{t})+\tilde{\hat{\mathbf{a}}}_{T_{t}}
  \end{eqnarray*}

Now we use Burkholder's inequality which states the following: For an  $\alpha>0$  there exists two constants $c_1>0$ and  $c_2>0$  such that

\begin{eqnarray*}
c_1 \mathbb{E}[\sum_{k=1}^t\|\hat{\mathbf{a}}_{k}-\hat{\mathbf{a}}_{k-1}\|^{2}]^{\alpha/2}&\leq &\mathbb{E} [\sup_{m\geq t} \|\hat{\mathbf{a}}_{t}\|]
\\&\leq& c_2\mathbb{E}[\sum_{k=1}^t\|\hat{\mathbf{a}}_{k}-\hat{\mathbf{a}}_{k-1}\|^{2}]^{\alpha/2}
  \end{eqnarray*}

\begin{eqnarray*}
c_1 \mathbb{E}[\sum_{k=1}^t\|\hat{\eta}_{k}-\hat{\eta}_{k-1}\|^{2}]^{\alpha/2}&\leq &\mathbb{E} [\sup_{k\leq t} \|{\eta}_{k}\|]
\\&\leq& c_2\mathbb{E}[\sum_{k=1}^t\|\hat{\eta}_{k}-{\eta}_{k}\|^{2}]^{\alpha/2}
  \end{eqnarray*}
Take $\hat{\eta}_t=\lambda\sum_{k\leq m}\|M_{t+k}\|^2$ and we use discrete Gronwall inequality which states that
\begin{eqnarray*}
\epsilon_{t+1}&\leq& C+L\sum_{k=0}^t \lambda \epsilon_k\\
\epsilon_{t+1}&\leq& Ce^{L\lambda t}
  \end{eqnarray*}
  where $\epsilon_t>0 \; \forall t\geq0$

  for $$\epsilon_k=\mathbb{E}[\sup_{k'\leq k}\|\tilde{\hat{\mathbf{a}}}_{T_{t+k'}}-\hat{\mathbf{a}}_{T_{t+k'}}\|^2]^{1/2}$$
  $C=\lambda T K_1 \sqrt{1+C_0^2}+\sqrt{\lambda K_2 (1+C_0^2)},$ $L=\max_{j\in \mathcal{N}} \mathbb{E}_\mathbf{S}[L_{j,\mathbf{S}}]$ for some $K_1, K_2=c_2$

  from the above we deduce that

  $$
  \mathbb{E}[\sup_{k'\leq k}\|\tilde{\hat{\mathbf{a}}}_{T_{t+k'}}-\hat{\mathbf{a}}_{T_{t+k'}}\|^2]^{1/2}\leq\sqrt{\lambda}C_T
  $$

  $K_1=\max(c_1,c_2\sqrt{1+C_0^2})$

  This shows that $\mathbb{E}[\sup_{k'\leq k}\|\tilde{\hat{\mathbf{a}}}_{T_{t+k'}}-\hat{\mathbf{a}}_{T_{t+k'}}\|^2]^{1/2}$ is bounded and implies Proposition \ref{Theorem1}. When $\lambda\longrightarrow0$ we have a weak convergence of the interpolated process to a solution of the ODE. The error gap is $\sqrt{\lambda}C_T$ which vanishes as $\lambda\longrightarrow0$.


%
\end{proof}

\section{Conditions for our Example}

Following are some details about how to obtain $\mathbf{a}^*$ for our application.

$$
    g_{i,j}\triangleq |h_{i,j}|^2
$$

$$
\bar{g}_{i,j}\triangleq \mathbb{E}_\mathbf{g}g_{i,j}=\mathbb{E}_\mathbf{g}| h_{i,j}|^2
$$


 From remark \ref{remark5} we can write $ \mathbb{E}_\mathbf{G}\frac{\partial \gamma_j(\mathbf{G},\mathbf{a}^*)}{\partial a_j} = \frac{\partial }{\partial a_j}\mathbb{E}_\mathbf{G}\gamma_j(\mathbf{G},\mathbf{a}^*) =0.$  Solving  $N$ equations we have the following matrix form.

\[
    \mathbf{a}^*=\begin{pmatrix}
      \mathbf{a}_1^* \\
      \mathbf{a}_2^* \\
      \vdots  \\
      \mathbf{a}_N^* \\
     \end{pmatrix},
    \bar{\mathbf{G}}=
     \begin{pmatrix}
          \bar{g}_{1,1} & \bar{g}_{1,2} & \cdots & \bar{g}_{1,N} \\
          \bar{g}_{2,1} & \bar{g}_{2,2} & \cdots & \bar{g}_{2,N} \\
          \vdots  & \vdots  & \ddots & \vdots  \\
          \bar{g}_{N,1} & \bar{g}_{N,2} & \cdots & \bar{g}_{N,N}
     \end{pmatrix},
     \bar{\mathbf{a}}=
     \begin{pmatrix}
          \frac{\omega \bar{g}_{1,1}}{\lambda}-\sigma^2 \\
          \frac{\omega \bar{g}_{2,2}}{\lambda}-\sigma^2 \\
          \vdots  \\
          \frac{\omega \bar{g}_{N,N}}{\lambda}-\sigma^2 \\
     \end{pmatrix}
\]

The above equation can be written in the compact form as

\[
    \mathbf{a}^*=\bar{\mathbf{G}}^{-1}\bar{\mathbf{a}}
\]

$\bar{\mathbf{G}}$ should be invertible and  all the elements in the vector $\bar{\mathbf{a}}$ should be strictly positive as they are a linear combination of power and gains which are positive. We can also write $ \omega \bar{g}_{j,j}>\lambda\sigma^2 $.
For this example we can write

$$
    \mathbb{E}_\mathbf{G}[g_{j,j}] > \sum_{j'\neq j}\mathbb{E}_\mathbf{G}[g_{j,j'}] \; \forall j, \;j'\neq j
$$
If this condition is satisfied then $\bar{\mathbf{G}}$ is invertible.

As $G$ is a matrix of random channel gains it is almost surely invertible. To show the invertibility of this matrix we just need to show that the $det(\bar{\mathbf{G}})\neq0$

\end{document}